\newcommand{\R}{\mathbb{R}}
\newcommand{\C}{\mathbb{C}}
\newcommand{\Pp}{\mathbb{P}}
\newcommand{\E}{\mathbb{E}}
\newcommand{\abs}[1]{\left\vert #1 \right\vert}
\newcommand{\Var}{\mathrm{Var}}
\newcommand{\wt}{\mathrm{wt}}
\newcommand{\sgn}{\mathrm{sgn}}
\newcommand{\ordST}{\mathrm{ord}}
\newtheorem{theorem}{Theorem}[section]
\newtheorem{conjecture}[theorem]{Conjecture}
\newtheorem{definition}[theorem]{Definition}
\newtheorem{lemma}[theorem]{Lemma}
\newtheorem{proposition}[theorem]{Proposition}
\newtheorem{remark}[theorem]{Remark}
\theoremstyle{definition}
\author{Marcin Kotowski \and B{\'a}lint Vir{\'a}g}
\title{Tracy-Widom fluctuations in 2D random Schr{\"o}dinger operators}
\begin{document}
\maketitle

\begin{abstract}
We construct a random Schr{\"o}dinger operator on a subset of the hexagonal lattice and study its smallest positive eigenvalues. Using an asymptotic mapping, we relate them to the partition function of the directed polymer model on the square lattice. For a specific choice of the edge weight distribution, we obtain a model known as the log-Gamma polymer, which is integrable. Recent results about the fluctuations of free energy for the log-Gamma polymer allow us to prove Tracy-Widom type fluctuations for the smallest eigenvalue of the random Schr{\"o}dinger operator. We also relate the distribution of its $k$ smallest positive eigenvalues to the nonintersecting partition functions of order $k$.
\end{abstract}


\begin{figure}[h!]
  \centering
\includegraphics[scale=0.2]{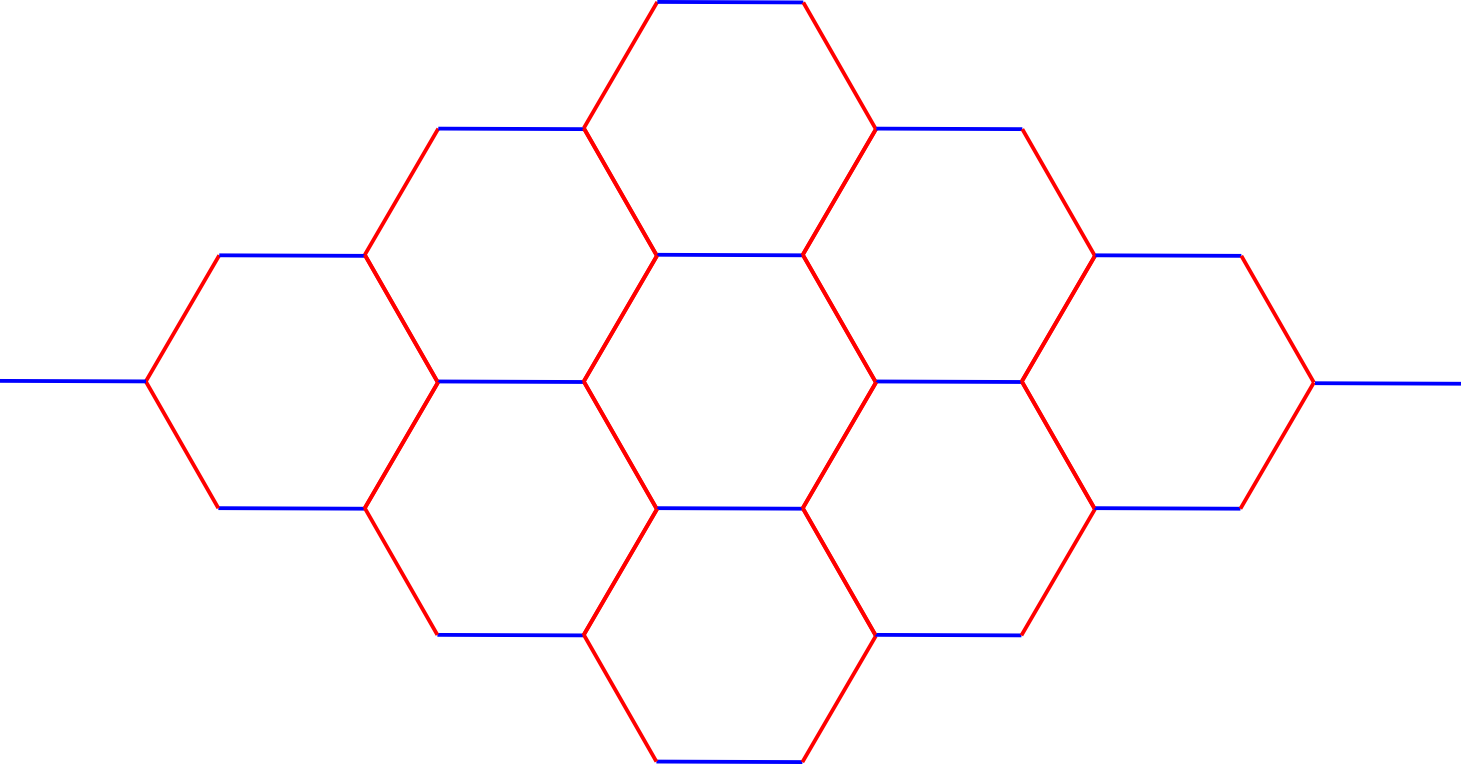}
  \caption{The lattice.}
\label{fig:hex}
\end{figure}

\section{Introduction}\label{sec:intro}

In this paper, we study the correspondence between certain random Schr{\"o}dinger operators defined on a subset of the 2D hexagonal lattice and a statistical physics model known as the directed log-Gamma polymer model. The directed log-Gamma polymer on a square lattice is obtained by putting random weights on the vertices of the lattice, drawn from the inverse Gamma distribution, and considering up-right paths connecting the opposite corners of the square, where each path is weighted by the product of its vertices. One is then interested in various statistical properties of such paths. The model has recently attracted considerable attention \cite{seppa},\cite{log-gamma}, \cite{jeremy}, as it is integrable, i.e. allows explicit computations.

We construct a 2D random Schr{\"o}dinger operator $H$ and a mapping which maps its eigenvalues onto certain quantities in the directed polymer model, called the partition functions. Using results about the fluctuations of free energy for log-Gamma polymers \cite{jeremy}, we prove Tracy--Widom GUE fluctuations for the smallest positive eigenvalue of $H$ (Theorem \ref{th:main}). To our knowledge this is the first known example of such fluctuations for a random Schr{\"o}dinger operator. Moreover, we provide a description of higher eigenvalues in terms of partition functions related to non-intersecting paths. Such objects arise naturally in the technique knows as geometric Robinson-Schoensted-Knuth correspondence \cite{tropical}.

We consider a random Schr{\"o}dinger operator defined on a hexagonal lattice in the shape of a rhombus. Formally, let $G_n$ be a subset of the hexagonal lattice consisting of $2n-1$ levels, with level $k$, for $k=0,\dots,2n-2$, containing $\min\{k,2n-k-2\}$ hexagons. The first and last level contain only a single edge. An example of such lattice for $n=4$ is shown in figure \ref{fig:hex}. We will call horizontal edges blue and the remaining edges red.

We consider edges equipped with random real-valued weights, where the weight of an edge $e$ is denoted by $w_e$. The random Schr{\"o}dinger operator $H_n$, acting on functions $f: G_n \rightarrow \R$, is the weighted adjacency operator on $G_n$:
\[
(H_n f)(v) = \sum_{e = (v,w)}w_e f(w)
\]
where the sum is over all edges adjacent to $v$.


We consider two models defined on the lattice $G_n$:
\begin{enumerate}
\item {\bf (i.i.d. model)} All edge weights are drawn independently at random from some distribution $X$ that is nonzero almost surely and satisfies $\E \log\abs{X} \geq 0$ and $\E e^{-t\log \abs{X}}, \E e^{t\log \abs{X}} < \infty$ for some $t > 0$
\item {\bf (mixed model)} Pick some parameter $\gamma > 0$. The red edges are given weight $1$. Each blue edge is independently assigned a weight drawn from the Gamma distribution $\Gamma(\gamma,1)$.
\end{enumerate}

In the mixed model, we prove the following theorem about the smallest positive eigenvalue of $H_n$:
\begin{theorem}\label{th:main}
Let $\lambda_n$ be the smallest positive eigenvalue of $H_n$ in the mixed model with parameter $\gamma$. For all $\gamma < \gamma^{\ast}$ we have as $n \rightarrow \infty$:
\[
\Pp\left(\frac{-\log \lambda_n - \bar{f}_{\gamma}n}{n^{1/3}} \leq r\right) \rightarrow F_{\mathrm{GUE}}\left(\left(\frac{\bar{g}_{\gamma}}{2}\right)^{-1/3}r\right)
\]
where $\gamma^{\ast} = 1.461632...$ is the unique positive real root of the digamma function $\Psi$, $F_{GUE}$ is the GUE Tracy-Widom distribution function, $\bar{f}_{\gamma} = -2 \Psi(\gamma / 2)$ and $\bar{g}_{\gamma} = - 2 \Psi''(\gamma / 2)$.
\end{theorem}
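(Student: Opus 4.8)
The plan is to use the chiral (bipartite) symmetry of $H_n$ to turn the smallest positive eigenvalue into an inverse-matrix norm, to identify that norm with the point-to-point partition function of the log-Gamma polymer by way of planar dimer theory, and then to invoke the known Tracy--Widom asymptotics for the latter.

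\emph{Step 1: reduction to the biadjacency matrix.} The hexagonal lattice is bipartite: write $G_n=A\sqcup B$ with every edge joining $A$ to $B$, and note (counting vertices level by level) that $\card{A}=\card{B}=:N_n=\Theta(n^2)$. In the adapted basis $H_n=\begin{psmallmatrix}0&M_n\\ M_n^{\top}&0\end{psmallmatrix}$, where $M_n$ is the weighted biadjacency matrix; thus the positive eigenvalues of $H_n$ are exactly the singular values of $M_n$. Since $G_n$ has a perfect matching, $M_n$ is invertible almost surely and $\lambda_n=\sigma_{\min}(M_n)=\nnorm{M_n^{-1}}_{\mathrm{op}}^{-1}$. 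Using $\max_{i,j}\card{(M_n^{-1})_{ij}}\le\nnorm{M_n^{-1}}_{\mathrm{op}}\le N_n\max_{i,j}\card{(M_n^{-1})_{ij}}$, this gives $-\log\lambda_n=\max_{i,j}\log\card{(M_n^{-1})_{ij}}+O(\log n)$, the error being negligible on the $n^{1/3}$ scale.

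\emph{Step 2: the asymptotic mapping.} Each entry $(M_n^{-1})_{ij}$ is, up to sign, a ratio $\det M_n^{(i,j)}/\det M_n$ of a minor (one row and one column deleted) to the full determinant. After fixing a Kasteleyn orientation of the planar graph $G_n$, $\det M_n$ is $\pm$ the dimer partition function $\sum_\mu\prod_{e\in\mu}w_e$, and the minors are analogous sums over dimer covers of $G_n$ with two vertices removed; dimer covers of the hexagonal rhombus are lozenge tilings, hence families of non-intersecting up/right lattice paths on a square grid of side $\Theta(n)$. Taking as reference the ground-state cover using as many blue (horizontal) edges as possible, every cover is obtained by flipping a disjoint union of alternating cycles, each contributing the product of the factors $1/w_e$ over the blue edges it abandons (red weights being $1$). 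In the ratio the frozen ``sea'' cancels exactly, leaving the partition function of a single directed path between the prescribed endpoints with independent inverse-Gamma$(\gamma)$ step weights $1/w_e$ --- precisely the homogeneous log-Gamma polymer with shape parameter $\gamma$. In particular, the entry of $M_n^{-1}$ joining the two single-edge tips of the rhombus equals $\pm Z_n$, the point-to-point log-Gamma partition function on a rectangle with both sides of order $n$; by Sepp\"al\"ainen's law of large numbers \cite{log-gamma} and the Borodin--Corwin--Remenik fluctuation theorem \cite{jeremy}, $\log Z_n$ has mean $-2\Psi(\gamma/2)\,n=\bar f_\gamma n$ and fluctuations of order $(-\Psi''(\gamma/2))^{1/3}n^{1/3}=(\bar g_\gamma/2)^{1/3}n^{1/3}$, the argument $\gamma/2$ being the usual symmetric-point value for the log-Gamma free energy. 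The hypothesis $\gamma<\gamma^{\ast}$ is exactly $\E\log(1/w_e)=-\Psi(\gamma)\ge0$ --- the analogue, with $X=1/w_e$, of the i.i.d.-model assumption $\E\log\card{X}\ge0$ --- which forces $\lambda_n\to0$ and validates the choice of reference cover.

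\emph{Step 3: putting it together.} One inequality is immediate: $\nnorm{M_n^{-1}}_{\mathrm{op}}\ge\card{Z_n}$, so $-\log\lambda_n\ge\log Z_n$. For the reverse I would use $\nnorm{M_n^{-1}}_{\mathrm{op}}\le N_n\max_{i,j}\card{(M_n^{-1})_{ij}}$ and show that no entry exceeds $Z_n$ by more than $e^{o(n^{1/3})}$: an endpoint pair far from the corners carries a log-Gamma polymer on a rectangle of side $\ell$ with $n-\ell$ large, whose free energy lies below $\log Z_n$ by about $\bar f_\gamma(n-\ell)$, which beats its $O(\ell^{1/3})$ fluctuations after a union bound over the $\mathrm{poly}(n)$ pairs (using the exponential moment/tail bounds on the weights); a pair near the corners carries a polymer sharing almost all of its environment with $Z_n$, so a decorrelation estimate keeps it within $o(n^{1/3})$. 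Hence $-\log\lambda_n=\log Z_n+o(n^{1/3})$ in probability, and the theorem follows from the Tracy--Widom limit for $\log Z_n$ by Slutsky's theorem.

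\emph{Main obstacle.} The genuinely hard work lies in Steps 2 and 3: carrying out the Kasteleyn/Lindstr\"om--Gessel--Viennot bookkeeping with consistent signs so the cofactor ratios are honest positive partition functions; matching the geometry of $G_n$ to a log-Gamma polymer on a rectangle with both sides $\Theta(n)$ and the exact parameter $\gamma$; and above all controlling $\max_{i,j}\log\card{(M_n^{-1})_{ij}}$ to within $o(n^{1/3})$ of the single point-to-point free energy, which requires moderate-deviation and decorrelation bounds for log-Gamma polymers uniform over polynomially many endpoint pairs. The Tracy--Widom asymptotics for the point-to-point log-Gamma free energy are quoted as a black box from \cite{jeremy}.
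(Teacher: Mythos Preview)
Your high-level skeleton is the same as the paper's: the bipartite reduction in Step~1 is exactly Lemma~\ref{lm:dual}, and the $\max_{i,j}|(M_n^{-1})_{ij}|\le \|M_n^{-1}\|_{\mathrm{op}}\le N_n\max_{i,j}|(M_n^{-1})_{ij}|$ sandwich is the $k=1$ case of Theorem~\ref{th:singular}. Where you diverge is in how you identify the inverse entries with polymer partition functions and in how you show the corner-to-corner entry dominates, and in both places you have chosen a route that is considerably harder than what the paper actually does.

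For Step~2, the paper does not use Kasteleyn/dimer theory at all. After the bipartite reduction the biadjacency matrix is the adjacency matrix $\widetilde A$ of a directed acyclic square lattice with loops, and Proposition~\ref{prop:fu} checks in two lines that the column of $\widetilde A^{-1}$ indexed by $u$ is the function $v\mapsto Z_{u,v}$, simply by verifying $\widetilde A f_u=\delta_u$. No cofactor ratios, no Kasteleyn orientation, no cancellation of a ``frozen sea''. The sign bookkeeping you worry about then disappears: in the mixed model every non-loop edge weight on the directed lattice is $-1$, and since all paths between fixed endpoints have the same length, $|Z_{u,v}|$ is automatically a positive sum of inverse-Gamma products.

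For Step~3, the paper avoids both the free-energy comparison for far-from-corner pairs and the ``decorrelation estimate'' for near-corner pairs. Instead (Proposition~\ref{prop:intersect-aux} with $k=1$) it simply extends: for any $s,t$, fix once and for all a path $\tau_1:(1,1)\to s$ and $\tau_2:t\to(n,n)$; then every path $\pi:s\to t$ yields a path $\tau_1\pi\tau_2:(1,1)\to(n,n)$ injectively, so $|Z_n|\ge |Z_{s,t}|\cdot|\wt(\tau_1)||\wt(\tau_2)|$. The hypothesis $\gamma<\gamma^\ast$ gives $\E\log|w_e|>0$, so by a single large-deviation bound and a polynomial union bound over $(s,t)$, with high probability every such $\tau_i$ has $|\wt(\tau_i)|\ge e^{-\delta n^{1/3}}$. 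This immediately gives $\max_{s,t}\log|Z_{s,t}|\le \log Z_n + o(n^{1/3})$ with no need for moderate deviations of the polymer free energy or any decorrelation input. The obstacles you list in your final paragraph are thus not obstacles in the paper's argument; they are artifacts of the route you chose.
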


The i.i.d.\ model is arguably more natural and we expect the theorem to hold also in that case:
\begin{conjecture}\label{conj:iid}
Theorem \ref{th:main} holds also in the i.i.d.\ model for appropriate choice of constants.
\end{conjecture}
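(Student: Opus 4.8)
The statement to be addressed is Conjecture \ref{conj:iid}: that the Tracy--Widom GUE limit of Theorem \ref{th:main} persists in the i.i.d.\ model, with the centering and scaling constants $\bar f_\gamma, \bar g_\gamma$ replaced by model-dependent analogues. My plan is to push the same asymptotic mapping used for the mixed model all the way through, so that the eigenvalue problem is again reduced to the free-energy asymptotics of a directed polymer on the square lattice, and then invoke (conditionally) polymer KPZ universality for that polymer. The first step is to recall the reduction: the mapping from $H_n$ to the directed polymer sends $-\log\lambda_n$ to (a shift of) the free energy $\log Z_n$ of the point-to-point partition function on an $n\times n$ square, where the vertex weights are built from the edge weights $w_e$ of $G_n$. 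In the i.i.d.\ model every edge carries an independent copy of $X$, so the induced vertex weights are i.i.d.\ (or at worst finitely dependent with a simple local structure) with law determined by $X$; the moment hypotheses $\E e^{\pm t\log|X|}<\infty$ guarantee the polymer partition function is finite and has all the integrability needed for the Law of Large Numbers and for the fluctuation theory. So the content of the conjecture is exactly: \emph{the point-to-point free energy of this i.i.d.\ directed polymer satisfies $\log Z_n = c_1 n + c_2 n^{1/3}\chi_{\mathrm{GUE}} + o(n^{1/3})$}, with $c_1,c_2$ depending on the law of $X$.

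The second step is to fix the constants. By Kingman subadditivity applied along the diagonal direction, $\tfrac1n\log Z_n \to c_1(X)$ almost surely and in $L^1$, where $c_1$ is the (deterministic) limiting free energy density; this requires only $\E\log|X|$-type integrability, which the hypothesis supplies. The constant $c_2$ would be identified, as in all KPZ statements, through the ``KPZ scaling relation'' $\chi = 2\xi - 1$ together with the non-random curvature of the limit shape: writing the point-to-point free energy as a function of the endpoint and taking its Hessian at the diagonal gives a positive-definite form whose entries feed into $c_2$. Concretely I expect $c_2$ to be expressible via the second derivative of the (model-dependent) limit-shape function, exactly paralleling the appearance of $\Psi''(\gamma/2)$ in Theorem \ref{th:main}, which is the log-Gamma specialization of this curvature. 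One sanity check: specializing $X$ so that the induced vertex weights become inverse-Gamma must recover $\bar f_\gamma$ and $\bar g_\gamma$ and $F_{\mathrm{GUE}}((\bar g_\gamma/2)^{-1/3}r)$; this both pins down the normalization and confirms the reduction is the same one used before.

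The third step is the only genuinely open one, and it is where I would be explicit that the argument is \emph{conditional}: one must invoke a universality theorem asserting that a generic i.i.d.\ directed polymer on $\Z^2$ with light-tailed weights has Tracy--Widom GUE point-to-point free-energy fluctuations on the $n^{1/3}$ scale. At present no such theorem is known in full generality; what is available is (a) the exactly solvable cases (log-Gamma, O'Connell--Yor, strict-weak, etc.) via Macdonald processes / geometric RSK \cite{log-gamma},\cite{tropical},\cite{jeremy}, and (b) partial universality results and the widely-believed KPZ conjecture. So the proof of Conjecture \ref{conj:iid} reduces \emph{cleanly} to polymer universality: granting it, everything else is the deterministic bookkeeping of steps one and two — verifying that the asymptotic mapping is robust to replacing the Gamma blue-edge weights by i.i.d.\ $X$'s, checking the tail conditions transfer, and translating the polymer CLT back through $-\log\lambda_n = \log Z_n + O(1)$ into the stated convergence for $\lambda_n$. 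The main obstacle is thus not in the operator-theoretic side at all but in the unresolved universality input; absent that, the honest statement is a theorem ``modulo KPZ universality for light-tailed i.i.d.\ directed polymers,'' and I would present it as such, isolating precisely which universality statement suffices so that any future progress on polymer universality immediately upgrades the conjecture to a theorem.
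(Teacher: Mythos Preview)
The statement is a conjecture; the paper does not prove it, and Remark \ref{rm:iid-model-polymer} explicitly lists the obstructions. You have correctly isolated one of them --- the missing polymer KPZ universality input --- but you have missed a second obstacle that the paper flags, and your reduction step is stated incorrectly. Your claim that $-\log\lambda_n = \log Z_n + O(1)$ is too strong even for the mixed model, where the reduction only gives $-\log\lambda_n = \log Z_n + o_{\Pp}(n^{1/3})$ via Proposition \ref{prop:intersect-aux}. The proof of that proposition requires $\E X_e > 0$ strictly. In the i.i.d.\ model, after passing to the directed square lattice (Remark \ref{rm:iid-model}) and pushing vertex weights onto edges (Proposition \ref{prop:only-edges}), the edge log-weight becomes $\log|X| - \log|X'|$ with $X,X'$ independent copies, so $\E X_e = 0$ exactly. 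In that regime the large-deviation bound \eqref{eq:ld1} is vacuous and the paper can only establish the weaker $o_{\Pp}(n^{1/2+\varepsilon})$ approximation of Remark \ref{rm:meanzero-intersect}, which is too coarse to transfer $n^{1/3}$ fluctuations from $Z_n$ to $\lambda_n$.

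Thus, even granting full polymer universality, your argument does not close: one still needs to show that $\max_{S,T}|Z_{S,T}|$ and the corner-to-corner $|Z_n|$ agree to within $o(n^{1/3})$ when the log-weights have mean zero, and your proposal does not address this. A further wrinkle you gloss over is that the i.i.d.\ polymer has signed weights in general, so the universality statement actually required is one for $\log|Z_n|$ of a signed polymer, not the standard positive-weight case.
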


In both models, we make the following conjecture generalizing Tracy--Widom fluctuations also to higher eigenvalues:
\begin{conjecture}\label{conj:airy}
Let $\lambda_{n,k}$ be the $k$th smallest positive eigenvalue of $H_n$. Let
$$
\alpha_{n,k}=\frac{-\log \lambda_{n,k} -\bar f_\gamma n}{(n\bar g_\gamma/2)^{1/3}}
$$
as in Theorem \ref{th:main}.
Then for any $k$, as $n\to\infty$, the tuple $(\alpha_{n,1},\ldots , \alpha_{n,k})$ converges in distribution to the top $k$ points of the Airy point process.
\end{conjecture}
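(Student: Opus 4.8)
\emph{Proof idea.} The plan is to transport the problem, via the correspondence between eigenvalues of $H_n$ and non-intersecting partition functions, to a statement purely about the log-Gamma polymer, and then to invoke the multi-path analogue of the free-energy asymptotics used in Theorem~\ref{th:main}. Write $\mathcal{Z}^{(j)}_n$ for the order-$j$ non-intersecting partition function of the log-Gamma polymer attached to $G_n$ in the mixed model with parameter $\gamma$, so that $\mathcal{Z}^{(1)}_n$ is the ordinary partition function figuring in Theorem~\ref{th:main}. The first step is to upgrade the single-eigenvalue correspondence to a joint one: I would show that, simultaneously for $j=1,\dots,k$,
\[
\sum_{i=1}^{j}\bigl(-\log\lambda_{n,i}\bigr)\;=\;\log\mathcal{Z}^{(j)}_n\;+\;o\!\left(n^{1/3}\right)
\]
in probability. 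The mechanism is a Lindström--Gessel--Viennot-type expansion: the $j\times j$ minors of the off-diagonal block of $H_n$ enumerate systems of $j$ non-intersecting up-right paths, and the compound-matrix norms controlling products of eigenvalues are governed, up to $o(n^{1/3})$, by the heaviest such system, i.e.\ by $\mathcal{Z}^{(j)}_n$; the subleading systems and the deterministic part of the asymptotic mapping contribute only lower order. Since the proof of Theorem~\ref{th:main} already establishes this estimate for $j=1$, the only new content here is bookkeeping — checking that the errors do not accumulate across the $k$ fixed levels.

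The second, decisive, step is the limit theorem for the non-intersecting partition functions themselves. Let $a_1>a_2>\cdots$ denote the points of the Airy point process, so that $a_1$ has the Tracy--Widom GUE law. I would prove that
\[
\left(\frac{\log\mathcal{Z}^{(j)}_n-j\,\bar f_\gamma\,n}{(n\bar g_\gamma/2)^{1/3}}\right)_{j=1}^{k}\;\xrightarrow[n\to\infty]{d}\;\left(\sum_{i=1}^{j}a_i\right)_{j=1}^{k}.
\]
At level $j=1$ this is exactly the input of Theorem~\ref{th:main} (from \cite{jeremy}); for general $j$ it is the statement that the family $(\mathcal{Z}^{(j)}_n)_j$ — a marginal of a Whittaker measure through the geometric RSK correspondence of \cite{tropical} — converges, after KPZ scaling, to the partial sums of the top points of the Airy point process, equivalently that the associated log-Gamma polymer line ensemble converges at a single time to the Airy line ensemble. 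Granting this, the conjecture follows by the continuous mapping theorem: writing $S_{n,j}$ for the $j$-th coordinate above and $S_{n,0}:=0$, the first step gives $\alpha_{n,j}=S_{n,j}-S_{n,j-1}+o(1)$, so the joint convergence of $(S_{n,1},\dots,S_{n,k})$ to $\bigl(\sum_{i\le j}a_i\bigr)_{j\le k}$ yields the joint convergence of the successive differences $(\alpha_{n,1},\dots,\alpha_{n,k})$ to $(a_1,\dots,a_k)$, which is the top $k$ of the Airy point process.

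The hard part is the displayed joint limit for $\log\mathcal{Z}^{(j)}_n$. Two routes seem available. The algebraic one: the gRSK image of the Gamma weight array has an explicit Whittaker-measure law, and one would need a Fredholm-determinant or Pfaffian representation for the joint distribution of its top $k$ coordinates, followed by a steepest-descent analysis showing the correlation kernel converges to the Airy kernel — a genuine multi-layer extension of the one-layer Fredholm analysis behind \cite{jeremy}, \cite{log-gamma}. The probabilistic one: establish that the log-Gamma polymer line ensemble is tight under KPZ scaling and enjoys a Brownian-Gibbs resampling property, and identify the limit with the Airy line ensemble via its characterization, in the spirit of the Corwin--Hammond programme for the semi-discrete polymer. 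Either way, this multi-path asymptotic statement is precisely what is not currently available for the log-Gamma polymer — which is why the result is stated here as a conjecture — while every other ingredient is an adaptation of the tools already used for Theorem~\ref{th:main}.
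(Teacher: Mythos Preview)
The statement is a \emph{conjecture}: the paper does not prove it, so there is no proof to compare against. Your outline correctly identifies the two ingredients that would be needed and, crucially, pinpoints the genuine obstruction --- the joint KPZ-scale limit of $(\log \mathcal{Z}^{(j)}_n)_{j\le k}$ toward the partial sums of the Airy point process is not currently available for the log-Gamma polymer. That diagnosis matches the paper's: the authors state the result as a conjecture precisely because this multi-path input is missing.

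One correction of emphasis. You describe the first step --- the approximation $\sum_{i\le j}(-\log\lambda_{n,i})=\log \mathcal{Z}^{(j)}_n+o(n^{1/3})$ simultaneously for $j=1,\dots,k$ --- as ``only bookkeeping'' beyond the $j=1$ case. In fact this is exactly the content of Theorem~\ref{th:mixed-topk-intro} (equivalently Theorem~\ref{th:mixed-topk}), and it is not routine: Theorem~\ref{th:singular} only compares $\prod_{i\le j}\sigma_i(A^{-1})$ to $\max_{S,T}\lvert Z^{(j)}_{S,T}\rvert$, and passing from this maximum over \emph{all} endpoint configurations to the corner-to-corner quantity $\mathcal{Z}^{(j)}_n=Z^{(j)}_{S_0,T_0}$ requires the combinatorial Lemma~\ref{lm:comb-everything} and the probabilistic Proposition~\ref{prop:intersect-aux}, which together occupy most of Sections~\ref{sec:prob} and~\ref{sec:comb}. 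The paper explicitly presents Theorem~\ref{th:mixed-topk-intro} as its ``step toward Conjecture~\ref{conj:airy}.'' So your first step is already done in the paper, but it is the paper's main technical work, not a triviality; your second step is the open problem.
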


By considering submatrices of $H_n$, this conjecture can be extended to multiple space-time
values of the (conjectured) scaling limit of last passage percolation. In particular, it should be possible to get the Airy sheet \cite{airy} as a limit in this model. This is in contrast with standard random matrix eigenvalue models, for which the Airy sheet is not expected to arise as a limit. 

We make a step toward Conjecture \ref{conj:airy} by proving that in the mixed model, the product of the bottom $k$ eigenvalues is related, up to order $n^{1/3}$, to the partition functions for $k$-tuples of non-intersecting paths. Such objects appear naturally while studying exact formulas related to the geometric RSK correspondence \cite{tropical}.  The theorem holds also for more general models, see Theorem \ref{th:mixed-topk}.

\begin{theorem}\label{th:mixed-topk-intro}
Let $\lambda_n, \dots, \lambda_{n-k+1}$ be the $k$ smallest positive eigenvalue of $H_n$ in the mixed model with parameter $\gamma$. For any fixed $k \geq 1$, let $Z^{(k)}_{n}$ be the non-intersecting partition function of order $k$ for the square lattice corresponding to the mixed model (Definition \ref{def:nonintersect-Z}). Then for any $\gamma < \gamma^{\ast}$ and any $\delta > 0$:
\[
\Pp\left(n^{-1/3}\abs{-\log\prod_{i=1}^{k}\lambda_{n-i+1} - \log \abs{Z^{(k)}_n}} > \delta \right) \rightarrow 0
\]
where $\gamma^{\ast}$ is the unique positive root of the digamma function.
\end{theorem}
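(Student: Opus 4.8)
The first step is to turn the statement about eigenvalues of $H_n$ into one about minors of a single matrix. Since $G_n$ is bipartite, split its vertices as $A\sqcup B$ and, listing $A$ before $B$, write $H_n=\begin{psmallmatrix}0 & M\\ M^{T} & 0\end{psmallmatrix}$ with $M$ the (square, a.s.\ invertible) weighted incidence matrix from $B$ to $A$. Then the eigenvalues of $H_n$ are $\pm\sigma_1(M),\dots,\pm\sigma_N(M)$, where $\sigma_1(M)\ge\dots\ge\sigma_N(M)>0$ are the singular values of $M$, so the product of the $k$ smallest positive eigenvalues is $\prod_{i=1}^{k}\lambda_{n-i+1}=\prod_{i=N-k+1}^{N}\sigma_i(M)=\bigl(\prod_{i=1}^{k}\sigma_i(M^{-1})\bigr)^{-1}=\nnorm{\Lambda^{k}M^{-1}}_{\mathrm{op}}^{-1}$, the reciprocal of the operator norm of the $k$th exterior power. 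Since $\nnorm{\Lambda^{k}M^{-1}}_{\mathrm{op}}$ lies between the largest $k\times k$ minor $\max_{J,L}\abs{\det (M^{-1})_{L,J}}$ and $\binom{N}{k}$ times it, and $\log\binom{N}{k}=O(\log n)=o(n^{1/3})$, it is enough to prove
\[
\log\max_{J,L}\abs{\det (M^{-1})_{L,J}}=\log\abs{Z^{(k)}_n}+o(n^{1/3})\quad\text{in probability.}
\]

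Second, I would invoke the asymptotic mapping already used to prove Theorem \ref{th:main}: after the gauge change appearing there, and up to a multiplicative factor $e^{o(n^{1/3})}$, the matrix $M^{-1}$ is the matrix of partition functions of up-right directed paths on the square lattice, the $(a,b)$ entry being the path sum from a source determined by $b$ to a sink determined by $a$ (with the weights induced by the mixed model). Inserting this into the Leibniz/Cauchy--Binet expansion of a $k\times k$ minor and applying the geometric (weighted) Lindström--Gessel--Viennot lemma — valid because up-right paths on the square lattice form a planar acyclic network — the intersecting $k$-tuples cancel in pairs, so $\det (M^{-1})_{L,J}$ equals $\pm$ the partition function of non-intersecting $k$-tuples with sources indexed by $J$ and sinks by $L$. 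For the $k$ sources nearest one corner of the square and the $k$ sinks nearest the opposite corner this is, by definition, $\pm Z^{(k)}_n$ (Definition \ref{def:nonintersect-Z}); as this corner minor is one particular minor, bounded above by $\max_{J,L}\abs{\det (M^{-1})_{L,J}}\le\nnorm{\Lambda^{k}M^{-1}}_{\mathrm{op}}$, we already obtain the easy half $\log\abs{Z^{(k)}_n}\le-\log\prod_{i=1}^{k}\lambda_{n-i+1}+o(n^{1/3})$.

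The heart of the proof is the reverse inequality: that among all $k\times k$ minors of $M^{-1}$ the corner one is, in $\log$ and up to $e^{o(n^{1/3})}$, the largest — equivalently, that the corner source/sink configuration maximizes the non-intersecting polymer partition function over all choices of $k$ sources and $k$ sinks, up to a subexponential factor. For $k=1$ this is precisely the estimate underlying Theorem \ref{th:main} (the corner entry of $M^{-1}$ dominates the whole matrix in $\log$ to order $o(n^{1/3})$). For general $k$, one extends any competing non-intersecting family to reach the two corners by prepending and appending path segments; on the range $\gamma<\gamma^{\ast}$ — which is exactly the condition $\bar f_\gamma>0$, i.e.\ $\E[\log(\text{polymer weight})]>0$, the same condition that forces $\lambda_n\to 0$ exponentially — these extra weight factors only help in expectation, so every competitor has mean $\log$-weight at most that of the corner configuration. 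Concentration of the constrained free energies on the scale $o(n^{1/3})$, together with a union bound over the polynomially many source/sink choices, then upgrades this to the required high-probability statement. Carrying out this last step — controlling the non-intersection constraint, and especially the near-corner competitors, on the Tracy--Widom scale $n^{1/3}$ rather than merely on scale $n$ — is the main obstacle; it is where the geometric RSK description of $Z^{(k)}_n$ from \cite{tropical} and the available concentration estimates for log-Gamma polymers are the natural inputs.

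Finally, nothing above uses integrability: only the chiral/bipartite structure of $H_n$, the directed-path expansion of $M^{-1}$, and the single-path estimates already in hand. The same argument therefore applies to the more general i.i.d.\ and mixed weights of Section \ref{sec:intro} under the stated moment and $\E\log\abs{X}\ge 0$ conditions, yielding Theorem \ref{th:mixed-topk}.
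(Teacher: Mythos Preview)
Your reduction is exactly the paper's: the bipartite block form is Lemma~\ref{lm:dual}, your exterior-power inequality is Theorem~\ref{th:singular}, and identifying the $k\times k$ minors of $M^{-1}$ with signed non-intersecting partition functions via Lindstr\"om--Gessel--Viennot is Propositions~\ref{prop:fu} and~\ref{prop:gsv}. The easy half $\log|Z^{(k)}_n|\le-\log\prod\lambda_{n-i+1}+o(n^{1/3})$ is correct and matches the paper.

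The gap is your hard direction. First, ``prepending and appending path segments'' to reach the corners is precisely what the paper calls the naive approach at the start of Section~\ref{sec:comb} and explicitly rejects: for $k\ge 2$ a source in $S$ may be cut off from $(1,1)$ by other paths in the tuple, so the extended family need not be non-intersecting and hence need not contribute to $Z^{(k)}_n$. Second, ``concentration of the constrained free energies on the scale $o(n^{1/3})$'' cannot be invoked, because those free energies fluctuate on scale \emph{exactly} $n^{1/3}$ (that is the Tracy--Widom scale you are trying to resolve); comparing two quantities via separate concentration about their means therefore leaves an $O(n^{1/3})$ gap, not $o(n^{1/3})$. Your closing paragraph is also inconsistent: you assert the proof uses no integrability, yet the only tools you name for this step --- geometric RSK and log-Gamma concentration --- are integrable inputs.

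What the paper actually does is a \emph{pathwise} comparison requiring no concentration of free energies. The combinatorial Lemma~\ref{lm:comb-everything} (whose proof is all of Section~\ref{sec:comb}) shows that for every $\sigma,S,T$ there is a polynomial-size family $\mathcal{P}$ of paths such that each non-intersecting tuple $\pi$ from $S$ to $T$ can be turned into one $\pi'$ from $S_0$ to $T_0$ by deleting at most $C=C(k)$ edges and such that $E(\pi)\triangle E(\pi')$ is a union of at most $C$ paths from $\mathcal{P}$. A Markov bound on single edges and a large-deviation bound on the polynomially many paths in $\mathcal{P}$ (here $\E X_e>0$, i.e.\ $\gamma<\gamma^\ast$, is essential) give $\wt(\pi')\ge\wt(\pi)\,e^{-D\delta n^{1/3}}$ for all $\pi$ simultaneously; since $\pi\mapsto\pi'$ has polynomially bounded fibers, summing yields $|Z^{(k)}_n|\ge e^{-D\delta n^{1/3}}|Z_{\sigma,S,T}|$ for all $\sigma,S,T$ (Proposition~\ref{prop:intersect-aux}). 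The real work --- uncrossing, pushing paths apart, and bounding the number of deleted edges by the number of turns inside the neighborhood of a \emph{simple} connecting path --- is the content of Lemmas~\ref{lm:uncrossing}--\ref{lm:bottom}, and is what your outline is missing.
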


We end this section with an outline of how the theorems are proved. In Section \ref{sec:eigen}, we prove that the eigenvalues of the operator are equal to the square roots of the singular values of the directed weighted square lattice. These, in turn, happen to be related to the partition functions of the polymer model on the lattice (Theorem \ref{th:singular}). Using this connection, in Section \ref{sec:intersect} we proceed to prove Theorem \ref{th:mixed-topk-intro} using a technical combinatorial lemma whose proof is contained in Section \ref{sec:comb}. Then, in Section \ref{sec:fluctuations}, we prove Theorem \ref{th:main} by exploiting known resuts about the fluctuations of the partition functions for the log-Gamma polymer.

\section{Eigenvalues and polymers}\label{sec:eigen}

The results in this section are deterministic -- we introduce the probabilistic part of the analysis in Section \ref{sec:prob}. In order to study the eigenvalues of a random Schr{\"o}dinger operator on a graph $G$, we first prove a lemma allowing us to study instead singular values of a certain directed graph derived from $G$.

\begin{lemma}\label{lm:dual}
Let $G = (V, E)$ be a weighted bipartite graph on $2n$ vertices with bipartition $V = A \sqcup B$. Let $w_e$ denote the weight of the edge $e$. Suppose that $G$ admits a perfect matching $M \subseteq E$ with edges $e_i = (a_i, b_i), a_i \in A, b_i \in B, i = 1, \dots, n$. Let $\widetilde{G}$ be a weighted directed graph on $n$ vertices, with vertex set $M$ and with edges defined as follows. For each $e_i \in M$, we have a loop $(e_i,e_i)$ with weight $w_{e_{i}}$. For each edge $f = (a_i,b_j) \notin M$, we have a directed edge $(e_i, e_j)$ with weight $w_f$.

Let $A$ be the adjacency matrix of $G$ and let $\widetilde{A}$ be the adjacency matrix of $\widetilde{G}$. Then the eigenvalues $\lambda_i$ of $A$ are equal to $\pm \sigma_i$, where $\sigma_i$ are the singular values of $\widetilde{A}$.
\end{lemma}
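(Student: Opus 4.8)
The plan is to exploit the matching $M$ to write the adjacency matrix $A$ of $G$ in a block form adapted to the bipartition, and then recognize the off-diagonal block as exactly $\widetilde{A}$. Order the vertices of $G$ so that $A$ comes first and $B$ second, and within each part order the vertices so that $a_i$ is the $i$th vertex of $A$ and $b_i$ is the $i$th vertex of $B$, where $e_i=(a_i,b_i)\in M$. Since $G$ is bipartite, in this ordering
\[
A=\begin{pmatrix}0 & B\\ B^{\mathsf T} & 0\end{pmatrix}
\]
for some $n\times n$ matrix $B$. I claim $B=\widetilde{A}$: the $(i,j)$ entry of $B$ is the weight of the edge between $a_i$ and $b_j$ (or $0$ if there is no such edge). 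When $i=j$ this is $w_{e_i}$, which is precisely the weight of the loop $(e_i,e_i)$ in $\widetilde{G}$; when $i\neq j$ and $f=(a_i,b_j)\notin M$ is an edge, this is $w_f$, the weight of the directed edge $(e_i,e_j)$ in $\widetilde{G}$. So $B$ and $\widetilde{A}$ have identical entries, hence $B=\widetilde{A}$.

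It then remains to recall the standard linear-algebra fact that a symmetric matrix of the form $\left(\begin{smallmatrix}0 & B\\ B^{\mathsf T} & 0\end{smallmatrix}\right)$ has eigenvalues $\pm\sigma_i$, where $\sigma_i$ are the singular values of $B$. I would prove this by taking a singular value decomposition $B=U\Sigma V^{\mathsf T}$ with $U,V$ orthogonal and $\Sigma=\mathrm{diag}(\sigma_1,\dots,\sigma_n)$, and checking directly that the orthogonal matrix $\tfrac{1}{\sqrt2}\left(\begin{smallmatrix}U & U\\ V & -V\end{smallmatrix}\right)$ conjugates $A$ to $\mathrm{diag}(\Sigma,-\Sigma)$. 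Applying this with $B=\widetilde{A}$ gives exactly the claim that the eigenvalues of $A$ are $\pm\sigma_i$ with $\sigma_i$ the singular values of $\widetilde{A}$.

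I do not expect any serious obstacle here; the content is really just unwinding the definition of $\widetilde{G}$ and matching it against the block structure forced by bipartiteness, plus one classical SVD identity. The only point requiring a little care is bookkeeping: making sure the vertex orderings of $A$ and $B$ are compatibly indexed by $M$ so that the loops land on the diagonal of $B$ and the non-matching edges land off the diagonal with the correct orientation $(e_i,e_j)$ corresponding to the entry $(i,j)$ versus $(j,i)$. Since $A$ is symmetric (the weights $w_e$ are real and undirected) this orientation ambiguity is harmless for the conclusion, but it is worth stating explicitly that $\widetilde{A}$ need not be symmetric even though $B^{\mathsf T}$ appears as the other block — the two off-diagonal blocks are transposes of each other by construction, and that is all that is used.
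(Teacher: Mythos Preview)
Your proposal is correct and follows essentially the same approach as the paper: order the vertices compatibly with the matching, observe that the bipartite adjacency matrix has block form $\begin{psmallmatrix}0 & \widetilde{A}\\ \widetilde{A}^{T} & 0\end{psmallmatrix}$, and invoke the standard fact relating eigenvalues of such a block matrix to singular values of $\widetilde{A}$. The only difference is cosmetic --- you spell out the SVD diagonalization explicitly, whereas the paper simply notes that the eigenvalues are $\pm$ the square roots of the eigenvalues of $\widetilde{A}\widetilde{A}^{T}$.
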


\begin{proof}
Let $A = (a_1, \dots, a_n), B = (b_1, \dots, b_n)$, ordered arbitrarily. Let us index the rows and columns of $A$ with $(a_1, \dots, a_n, b_1, \dots, b_n)$. Then $A$ has the block form:
\[
A = 
\begin{pmatrix}
0 & \widetilde{A} \\
\widetilde{A}^T & 0
\end{pmatrix}
\]
Indeed, each edge $(a_i, b_i)$ in $G$ corresponds to the edge $(e_i, e_i)$ in $\widetilde{G}$, giving the diagonal entries of $\widetilde{A}$. Each edge $(a_i, b_j)$ for $i \neq j$ corresponds to an edge $(e_i, e_j)$ in $\widetilde{G}$, giving the off-diagonal entries. Clearly, the eigenvalues of $A$ are equal to $\pm$ the square roots of eigenvalues of $\widetilde{A}\widetilde{A}^T$, which are simply the singular values of $\widetilde{A}$.
\end{proof}

We now construct a general mapping between singular values of a directed graph $G$ and partition functions of the polymer model on the same graph. The results are stated in generality, but will be used for directed graphs derived from the particular lattice $G_n$ described in Section \ref{sec:intro}.

Let $G$ be a directed acyclic weighted graph on $n$ vertices and let $A$ denote its adjacency matrix. Assume that every vertex has a loop with nonzero weight. This implies that $A$ is invertible. Indeed, consider the set of vertices with no incoming edges, which is nonempty since the graph is acyclic. Since the loop weights are nonzero, the equation $Af=0$ implies that $f=0$ at such vertices. We can then remove them and repeat until there are no vertices left, proving that $f \equiv 0$.

For $v,w \in G$, a path $\pi$ from $v$ to $w$ is defined to be a sequence of edges connecting vertices $(v=u_{1} \rightarrow u_2 \rightarrow \dots \rightarrow u_{n}=w)$, where none of the edges are loops. We allow a path of length zero connecting a vertex $v$ to itself. Let $\Pi(v,w)$ denote the set of all paths from $v$ to $w$. We will say that a vertex $v$ precedes $w$ if there is a positive length path from $v$ to $w$.

We define new weights on vertices and edges of $G$ in the following way. For a vertex $u$ we put $w_u = \frac{1}{A_{u,u}}$ and for an edge $e=(u,v)$ we put $w_{u,v} = -A_{u,v}$. For a path $\pi  = (u_1 \rightarrow \dots \rightarrow u_n)$ let its weight $\wt(\pi)$ be defined as: 
\begin{equation}\label{eq:wt}
\wt(\pi) := \prod_{i=1}^{n-1}w_{u_{i},u_{i+1}}\prod_{i=1}^{n} w_{u_{i}} 
\end{equation}
Note that the weight of an empty path from $u$ to itself is $w_u$.

\begin{definition}\label{def-partitionfcn}
Fix any $k \in \{1,\dots,n\}$ and two sequences of distinct vertices $S =(u_1, \dots, u_k)$, $V = (v_1,\dots,v_k)$. Consider $k$-tuples of vertex-disjoint paths $\pi=(\pi_1,\dots,\pi_k)$, with $\pi_i$ connecting $u_i$ and $v_{\sigma(i)}$ for some permutation $\sigma$. Denote this permutation by $\sigma(\pi)$. We define:
\[
Z^{(k)}_{S,T} := \sum_{\pi=(\pi_1,\dots,\pi_k)}\sgn(\sigma(\pi))\prod_{i=1}^{k}\wt(\pi_i)
\]
For $k=1$ we will simply write:
\[
Z_{u,v} = \sum_{\pi: v\rightarrow w}\wt(\pi)
\]
\end{definition}
We put $Z_{u,v}$ equal zero if there are no paths from $u$ to $v$.


For $u \in G$, let $f_u$ denote the function defined on the vertices of $G$ by $f_u(v) = Z_{u,v}$. In particular, $f_u(v)=0$ if $v$ precedes $u$ and $f_u(u)=w_u$. Let $\delta_u$ be the function equal to $1$ on $u$ and $0$ otherwise.

\begin{proposition}\label{prop:fu}
The functions $f_u$ satisfy $Af_u = \delta_u$.
\end{proposition}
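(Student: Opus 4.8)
The plan is to verify the identity $A f_u = \delta_u$ by evaluating the left-hand side at an arbitrary vertex $v$ and showing it equals $\id[v = u]$. By definition, $(A f_u)(v) = \sum_{w} A_{v,w} f_u(w) = A_{v,v} Z_{u,v} + \sum_{w \neq v} A_{v,w} Z_{u,w}$, where the sum is over out-neighbors $w$ of $v$ (since $G$ is directed, $A_{v,w}$ is nonzero only along edges $v \to w$). I would rewrite this using the new weights: $A_{v,v} = 1/w_v$ and $A_{v,w} = -w_{v,w}$, so that $(A f_u)(v) = \frac{1}{w_v} Z_{u,v} - \sum_{w: v \to w} w_{v,w} Z_{u,w}$. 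The goal is then to show this expression vanishes when $v \neq u$ and equals $1$ when $v = u$.

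The key combinatorial step is a decomposition of paths from $u$ to $v$ according to their \emph{last} edge. Every nonempty path $\pi$ from $u$ to $v$ has a unique final edge $w \to v$ for some out-neighbor... more precisely, a unique penultimate vertex $w$ with $w \to v$ an edge, and deleting the last edge gives a path from $u$ to $w$; conversely, any path from $u$ to $w$ followed by the edge $w \to v$ gives a path from $u$ to $v$. Tracking weights via \eqref{eq:wt}, a path $\pi': u \to w$ of weight $\wt(\pi')$ extends to a path $\pi: u \to v$ of weight $\wt(\pi') \cdot \frac{w_{w,v}}{w_w} \cdot w_v$ — the factor $w_{w,v}$ from the new edge, $w_v$ from the new endpoint $v$, and we must divide by $w_w$ since in $\wt(\pi')$ the endpoint $w$ was counted as an interior-type endpoint weight but in $\pi$ it becomes interior. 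Summing over all such $\pi'$ (i.e.\ over all $w$ with $w \to v$) gives $Z_{u,v} - \id[u=v] w_v = w_v \sum_{w: w \to v} \frac{w_{w,v}}{w_w} Z_{u,w}$, where the subtracted term accounts for the empty path when $u = v$ (which has no last edge). Multiplying through by $1/w_v$ and rearranging yields exactly $\frac{1}{w_v} Z_{u,v} - \sum_{w: v \to w} w_{v,w} Z_{u,w} = \id[u = v]$, which is the desired identity — one must be slightly careful that the sum on the right side of the recursion is over in-neighbors of $v$, matching the sum in $(Af_u)(v)$ which is over out-neighbors of $v$; since $A_{v,w} \neq 0$ iff $v \to w$ is an edge, and I want to pair the term $A_{v,w} Z_{u,w}$ with extending paths ending at $v$ using edge $w \to v$, I need to instead do the last-edge decomposition relative to the correct orientation, so I would decompose paths from $u$ to $v$ by their last edge $w \to v$ and note $A_{v,w}$ is the wrong entry; the correct pairing uses that in $(Af_u)(v)$ the relevant neighbors are out-neighbors of $v$, so I should instead decompose by \emph{first} edge or transpose the bookkeeping. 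I would settle this by working with the identity $Af_u = \delta_u$ as stating that $f_u$ is the $u$-th column of $A^{-1}$, equivalently $\sum_w A_{v,w} Z_{u,w} = \id[v=u]$, and verifying it by the first-edge decomposition of paths from $u$: a path $u \to v$ is either empty (only if $u = v$) or starts with an edge $u \to u'$.

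Actually the cleanest route, which I would adopt, is to decompose a path from $u$ to $v$ by its \emph{first} edge. Fix $v$ and expand $(A f_u)(v) = \sum_{w:\, v \to w} A_{v,w} Z_{u,w} + A_{v,v} Z_{u,v}$. Hmm — this still pairs $v$'s out-neighbors with paths ending at them, not naturally with a first-edge decomposition. The genuine resolution: the identity $A f_u = \delta_u$ is equivalent to the \emph{rows} of $A$ acting on the vector $(Z_{u,w})_w$; row $v$ of $A$ has support on $\{v\} \cup \{w : v \to w\}$. So I must relate $Z_{u,w}$ for out-neighbors $w$ of $v$ and $Z_{u,v}$ itself. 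This is precisely a \emph{last-edge into $w$} type relation read backwards, and the right statement is: $Z_{u,v}$ minus its empty-path part equals $w_v \sum_{w}(\text{edge } v\to w \text{ weight as } w_{v,w})\cdot(\cdots)$ — no. I will instead simply observe $f_u(v) = Z_{u,v}$ counts paths \emph{from} $u$, so the natural recursion is on the first step \emph{out of} $u$, giving $A^{-1}$ as having columns $f_u$; dually $f_u$ satisfies $A f_u = \delta_u$ iff for each $v$, $\sum_w A_{v,w} f_u(w) = \id[u=v]$; and I verify this by instead defining $g_v(u) = Z_{u,v}$ and checking $g_v A = \delta_v$ via first-edge-out-of-$u$ decomposition, then transposing. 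The main obstacle is thus purely one of orientation bookkeeping — getting the first-vs-last edge decomposition aligned with the row-vs-column convention of $A$ — and once that is pinned down, the weight accounting via \eqref{eq:wt} and the telescoping are routine. The acyclicity guarantees all sums are finite and that the empty path is the unique path of length zero, so no convergence issues arise.
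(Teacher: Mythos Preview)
Your approach is the paper's approach---decompose paths by a boundary edge and telescope---but you are overthinking the orientation bookkeeping. The paper's proof is three lines: for $w\neq v$ it writes $(Af_v)(w)=\sum_{u\to w}A_{u,w}f_v(u)+A_{w,w}f_v(w)$, i.e.\ it sums over \emph{in}-neighbours of $w$, which is exactly the last-edge decomposition of $\Pi(v,w)$; since every $\sigma\in\Pi(v,w)$ factors uniquely as $\pi\cdot(u\to w)$ with $\wt(\sigma)=\wt(\pi)\cdot(-A_{u,w})/A_{w,w}$, the two sums cancel.

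Your worry that row $v$ of $A$ should pick out out-neighbours is legitimate under the standard convention $A_{v,w}\neq 0\Leftrightarrow v\to w$; under that convention what is actually being proved is $A^{T}f_u=\delta_u$, i.e.\ $f_u$ is the $u$-th \emph{row} of $A^{-1}$, and the paper is tacitly using the transposed convention in this line. Your detour through $g_v(u)=Z_{u,v}$ recovers exactly this, but your stated identity $g_v A=\delta_v$ is off: the first-edge-out-of-$u$ recursion gives $\sum_{w}A_{u,w}Z_{w,v}=\id[u=v]$, which is $A g_v=\delta_v$, not $g_v A=\delta_v$; transposing that then yields $A^{T}f_u=\delta_u$, which is all that is used downstream (only $(A^{-1})_{v,u}=Z_{u,v}$ matters). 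So the idea is right---just fix one convention and write the two-line last-edge computation directly rather than shuttling between rows and columns.
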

\begin{proof}
We clearly have:
\[
(Af_v)(v) = A_{v,v}f_v(v) = 1
\]
For $v \neq w$, we have:
\begin{align*}
&(Af_v)(w) = \sum_{u \rightarrow w}A_{u,w}f_v(u) + A_{w,w}f_v(w) =
\sum_{u \rightarrow w}A_{u,w}\sum_{\pi \in \Pi(v,u)}\wt(\pi) + A_{w,w}\sum_{\pi \in \Pi(v,w)}\wt(\pi) = \\
&-A_{w,w}\sum_{\sigma \in \Pi(v,w)}\wt(\sigma) + A_{w,w}\sum_{\pi \in \Pi(v,w)}\wt(\pi) = 0
\end{align*}
\end{proof}

The quantities $Z^{(k)}_{S,T}$ can be related to $f_u$ using the well known Lindstrom-Gessel-Viennot formula \cite{gv} for expressing sums over non-intersecting paths as determinants:
\begin{proposition}\label{prop:gsv}
For $S=(u_1,\dots,u_k),T=(v_1,\dots,v_k)$ we have:
\begin{equation}\label{eq:gsv}
Z^{(k)}_{S,T} = \det(f_{u_{i}}(v_{j}))_{i,j=1}^{k}
\end{equation}
\end{proposition}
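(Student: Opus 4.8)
\section*{Proof proposal}

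The plan is to derive (\ref{eq:gsv}) from the sign-reversing involution underlying the Lindstr{\"o}m--Gessel--Viennot lemma, in the variant that also keeps track of vertex weights. Expanding the determinant by the Leibniz formula and then expanding each product using $f_{u_i}(v_j) = Z_{u_i,v_j} = \sum_{\pi\in\Pi(u_i,v_j)}\wt(\pi)$ gives
\[
\det\big(f_{u_i}(v_j)\big)_{i,j=1}^{k} \;=\; \sum_{\sigma}\sgn(\sigma)\sum_{\pi_1,\dots,\pi_k}\ \prod_{i=1}^{k}\wt(\pi_i),
\]
where $\sigma$ ranges over permutations of $\{1,\dots,k\}$ and the inner sum ranges over all $k$-tuples of paths with $\pi_i\in\Pi(u_i,v_{\sigma(i)})$, with no disjointness imposed. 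Re-indexing the double sum by the tuple $\pi=(\pi_1,\dots,\pi_k)$ and recalling $\sigma(\pi)$ from Definition \ref{def-partitionfcn}, the right-hand side becomes $\sum_{\pi}\sgn(\sigma(\pi))\prod_i\wt(\pi_i)$, the sum being over all tuples $\pi$ in which $\pi_i$ runs from $u_i$ to $v_{\sigma(\pi)(i)}$. The vertex-disjoint tuples among these contribute precisely $Z^{(k)}_{S,T}$, so it suffices to build a weight-preserving, sign-reversing involution on the set of tuples that are \emph{not} vertex-disjoint.

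For such a tuple, let $i$ be the least index for which $\pi_i$ shares a vertex with some $\pi_\ell$, $\ell\neq i$; let $x$ be the first vertex along $\pi_i$ lying on another path; and let $j\neq i$ be the least index with $x\in\pi_j$. Minimality of $i$ forces $j>i$. Writing $\pi_i=h_i\cdot t_i$ and $\pi_j=h_j\cdot t_j$ for the splittings of these two paths at $x$, define $\Phi$ to replace $\pi_i,\pi_j$ by $\pi_i'=h_i\cdot t_j$ and $\pi_j'=h_j\cdot t_i$, leaving the other paths alone. Acyclicity guarantees $\pi_i'$ and $\pi_j'$ are again loop-free simple paths (a repeated vertex would close a cycle), so $\Phi(\pi)$ is a legitimate tuple, now with $\pi_i'\in\Pi(u_i,v_{\sigma(j)})$ and $\pi_j'\in\Pi(u_j,v_{\sigma(i)})$; hence $\sigma(\Phi(\pi))=\sigma(\pi)\circ(i\,j)$ and $\sgn(\sigma(\Phi(\pi)))=-\sgn(\sigma(\pi))$. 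Weights are preserved because, by (\ref{eq:wt}), both $\wt(\pi_i)\wt(\pi_j)$ and $\wt(\pi_i')\wt(\pi_j')$ equal $\wt(h_i)\wt(h_j)\wt(t_i)\wt(t_j)\,w_x^{-2}$, the vertex $x$ being counted once in each head and once in each tail.

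The step needing care --- and the only real obstacle --- is verifying that $\Phi$ is an involution, i.e. that $i$, $x$ and $j$ come out unchanged when the recipe is applied to $\Phi(\pi)$. This is the classical bookkeeping, and it genuinely uses acyclicity: paths of index $<i$ are untouched and meet neither $\pi_i'$ nor $\pi_j'$ (each of which uses only edges already present in $\pi_i\cup\pi_j$), so $i$ is still the least ``bad'' index; the initial segment of $\pi_i'$ agrees with that of $\pi_i$ and, because $\pi_i$ and $\pi_j$ are vertex-simple, meets none of the (possibly modified) other paths before $x$, so $x$ is still the first collision vertex on $\pi_i'$; and the set of indices of paths through $x$ is unchanged, so $j$ is recovered as well. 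A second application of $\Phi$ then swaps the tails back. With this in hand the non-disjoint tuples cancel in pairs, leaving $Z^{(k)}_{S,T}$, which is (\ref{eq:gsv}). Neither the loops of $G$ (paths exclude them by definition) nor the vertex weights (handled by the factorization in (\ref{eq:wt})) create any additional difficulty.
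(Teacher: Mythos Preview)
Your argument is correct: you reprove the Lindstr{\"o}m--Gessel--Viennot identity from scratch in the vertex-weighted setting, expanding the determinant and cancelling non-disjoint tuples via the classical tail-swapping involution. The weight bookkeeping $\wt(\pi_i)\wt(\pi_j)=\wt(h_i)\wt(h_j)\wt(t_i)\wt(t_j)\,w_x^{-2}$ is exactly right, and your verification that $\Phi$ is an involution (in particular, that $h_i$ before $x$ meets neither $h_j$ nor $t_i$, the first by minimality of $x$ and the second by simplicity of $\pi_i$; and that $\pi_j'=h_j\cdot t_i$ is simple by acyclicity) is the correct classical bookkeeping.

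The paper takes a different, shorter route: rather than redo the involution, it absorbs vertex weights into edge weights by setting $w'_{u,v}=w_{u,v}w_u$ and $w'_u=1$, invokes the standard edge-weighted LGV lemma as a black box to get $Z^{(k)'}_{S,T}=\det(f'_{u_i}(v_j))$, and then observes $Z^{(k)}_{S,T}=Z^{(k)'}_{S,T}\cdot\prod_{v\in T}w_v$ and $f_u(v)=f'_u(v)\cdot w_v$, so the factor $\prod_{v\in T}w_v$ passes through the determinant columnwise. Your approach is more self-contained and shows explicitly why the vertex weights cause no trouble; the paper's approach is quicker if one is willing to cite \cite{gv}.
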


\begin{proof}
The standard Lindstrom-Gessel-Viennot formula is usually formulated with weights only on the edges. To obtain \ref{eq:gsv} in the general case, consider a graph $G'$ where for an edge $(u,v)$ we put $w'_{u,v} = w_{u,v} w_u $ and $w'_{u}=1$. By applying the standard Lindstrom-Gessel-Viennot formula to $G'$ we obtain $Z^{(k)'}_{S,T} = \det(f'_{u_{i}}(v_{j}))_{i,j=1}^{k}$. The proof follows by noting that $Z^{(k)}_{S,T} = Z^{(k)'}_{S,T} \cdot \prod_{v \in T}w_v$ and $f_u(v) = f'_u(v) \cdot w_v$.
\end{proof}

Let $\sigma_1 \geq \sigma_2 \geq \dots \geq \sigma_n$ denote the singular values of $A^{-1}$. 

\begin{theorem}\label{th:singular}
For any $k=1,\dots,n$, we have:
\[
\max_{S,T}\abs{Z^{(k)}_{S,T}} \leq \prod_{i=1}^{k}\sigma_i(A^{-1}) \leq \binom{n}{k}^2 \cdot \max_{S,T}\abs{Z^{(k)}_{S,T}}
\]
where the maximum ranges over all pairs of sequences of distinct vertices $S=(u_1,\dots,u_k), T=(v_1,\dots,v_k)$.
\end{theorem}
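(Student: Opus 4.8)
\textbf{Proof plan.} The plan is to first identify the matrix of partition functions with $A^{-1}$, and then reduce the statement to a classical fact about $k\times k$ minors versus products of singular values. By Proposition \ref{prop:fu}, $Af_u=\delta_u$, so $f_u$ is the $u$-th column of $A^{-1}$, i.e.\ $(A^{-1})_{v,u}=f_u(v)=Z_{u,v}$. Plugging this into Proposition \ref{prop:gsv}, for distinct-vertex sequences $S=(u_1,\dots,u_k)$, $T=(v_1,\dots,v_k)$ one gets
\[
Z^{(k)}_{S,T}=\det\big(f_{u_i}(v_j)\big)_{i,j=1}^k=\det\big((A^{-1})_{v_j,u_i}\big)_{i,j=1}^k ,
\]
which, after a harmless transpose, is exactly the $k\times k$ minor of $B:=A^{-1}$ with row set $\{v_1,\dots,v_k\}$ and column set $\{u_1,\dots,u_k\}$. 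Since reordering $S$ or $T$ only flips a sign, $\max_{S,T}|Z^{(k)}_{S,T}|$ equals $\max|\det B[I,J]|$ over all pairs of $k$-element vertex subsets $I,J$. So the theorem is equivalent to the linear-algebra claim that for an $n\times n$ (real) matrix $B$,
\[
\max_{|I|=|J|=k}\big|\det B[I,J]\big|\ \le\ \prod_{i=1}^k\sigma_i(B)\ \le\ \binom{n}{k}^2\max_{|I|=|J|=k}\big|\det B[I,J]\big| .
\]

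For the left inequality I would fix $k$-subsets $I,J$, let $P_I,P_J$ be the $n\times k$ matrices of the corresponding standard basis vectors, and note $\det B[I,J]=\pm\det(P_I^{T}BP_J)$. Since $P_I,P_J$ have orthonormal columns, the min–max characterization of singular values gives $\sigma_\ell(P_I^{T}BP_J)\le\sigma_\ell(B)$ for all $\ell$, hence $|\det B[I,J]|=\prod_{\ell\le k}\sigma_\ell(P_I^{T}BP_J)\le\prod_{\ell\le k}\sigma_\ell(B)$. (Equivalently: a $k\times k$ minor of $B$ is a matrix entry of $\wedge^k B$, whose operator norm is $\prod_{i\le k}\sigma_i(B)$.)

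For the right inequality I would take the SVD $B=\sum_i\sigma_i(B)\,u_iv_i^{T}$ and set $U=[u_1|\cdots|u_k]$, $V=[v_1|\cdots|v_k]$, so that $U^{T}BV=\operatorname{diag}(\sigma_1(B),\dots,\sigma_k(B))$ and $\prod_{i\le k}\sigma_i(B)=\det(U^{T}BV)$. Applying the Cauchy–Binet formula twice expands this as $\sum_{|I|=k}\sum_{|J|=k}\det(U[I,:])\,\det B[I,J]\,\det(V[J,:])$. Because $U^{T}U=I_k$, Cauchy–Binet also gives $\sum_{|I|=k}\det(U[I,:])^2=1$, so $|\det U[I,:]|\le1$ for every $I$, and likewise for $V$; bounding each of the $\binom{n}{k}^2$ summands by $\max_{I,J}|\det B[I,J]|$ finishes the proof.

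\textbf{Expected difficulty.} There is no real obstacle here — it is bookkeeping plus standard SVD/Cauchy–Binet facts. The one point needing care is tracking the transpose in $f_u(v)=(A^{-1})_{v,u}$ through Proposition \ref{prop:gsv} and checking that \emph{every} $k\times k$ minor of $A^{-1}$ is realized by an admissible pair $(S,T)$ (which uses that $A^{-1}$ exists, already noted, and that $Z^{(k)}_{S,T}$ is defined for all distinct-vertex sequences). The constant $\binom{n}{k}^2$ is deliberately loose: the crude bound "number of terms times largest term" already gives it, and Cauchy–Schwarz on $\sum_I|\det U[I,:]|$ would even yield $\binom{n}{k}$, but this is not needed.
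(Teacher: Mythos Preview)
Your proposal is correct and follows essentially the same approach as the paper: both identify $Z^{(k)}_{S,T}$ with a $k\times k$ minor of $A^{-1}$ via Propositions~\ref{prop:fu} and~\ref{prop:gsv}, and then obtain the upper bound by a double Cauchy--Binet expansion of $\det(U^{*}A^{-1}V)$ for $U,V$ with orthonormal columns, bounding each $|\det U[I,:]|,|\det V[J,:]|\le 1$. The only cosmetic difference is in the lower bound, where the paper invokes the variational formula $\prod_{i\le k}\sigma_i(A^{-1})=\max_{U,V}|\det(U^{*}A^{-1}V)|$ and plugs in $U=I_S$, $V=I_T$, while you phrase the same fact as singular-value interlacing for submatrices (or the operator norm of $\wedge^k B$); these are equivalent standard statements.
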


\begin{proof}
We first use the following formula for the product of the singular values \cite{hogben}:
\begin{equation}\label{eq:sigma-uav}
\prod_{i=1}^{k}\sigma_i(A^{-1}) = \max\{\abs{\det(U^{\ast}A^{-1}V)} : U,V \in \C^{n\times k}, UU^{\ast}=VV^{\ast}=I_k\}
\end{equation}

For a sequence of distinct vertices $S$ of size $k$ and a matrix $B \in \C^{n\times k}$, let $B_{S}$ denote the submatrix obtained by taking rows with indices corresponding to $S$. With this notation $I_S$ is the matrix having columns equal to $\delta_s$ for $s \in S$, i.e. the coordinate vectors corresponding to vertices in $S$. We have $B_S = I_S^{\ast} B$.

For the lower bound, for any $S=(u_1,\dots,u_k), T=(v_1,\dots,v_k)$ we plug $U=I_S, V=I_T$ into \eqref{eq:sigma-uav}. Note that by Proposition \ref{prop:fu}, the matrix $A^{-1}$ expressed in the basis consisting of $\delta_u$ has the functions $f_u$ as its columns. Thus, by Proposition \ref{prop:gsv} we have $\det((I_S)^{\ast} A^{-1}I_T) = Z^{(k)}_{S,T}$, from which the lower bound follows.

For the upper bound, for any $U,V$ we use the Cauchy-Binet formula twice:
\begin{align*}
&\det(U^{\ast}A^{-1}V) = \sum_{S}\det(U^{\ast}_{S})\det((A^{-1}V)_{S}) =
\sum_{S}\det(U^{\ast}I_S)\det(I_S^{\ast} A^{-1}V) = \\
& \sum_{S,T}\det(U^{\ast}I_S) \cdot \det(I_S^{\ast} A^{-1}I_T) \cdot \det(I_T^{\ast}V)
\end{align*}
Clearly, we have $\abs{\det(U^{\ast}I_S)},\abs{\det(I_T^{\ast}V)} \leq 1$, so:
\[
\max_{U,V}\abs{\det(U^{\ast}A^{-1}V)} \leq \binom{n}{k}^2 \cdot\max_{S,T}\abs{\det(I_S^{\ast} A^{-1}I_T)} = \binom{n}{k}^2 \cdot \abs{\max_{S,T}Z_{S,T}}
\]
\end{proof}

We will now apply the construction above to the hexagonal lattice $G_n$ from the previous section. In the case of $G_n$, the perfect matching in Lemma \ref{lm:dual} consists of blue edges. The corresponding directed graph $\widetilde{G}_n$ is a directed square lattice with a loop added to each vertex. Both lattices are shown in Figure \ref{fig:lattices}. 

\begin{figure}[ht!]
  \centering
\includegraphics[scale=0.2]{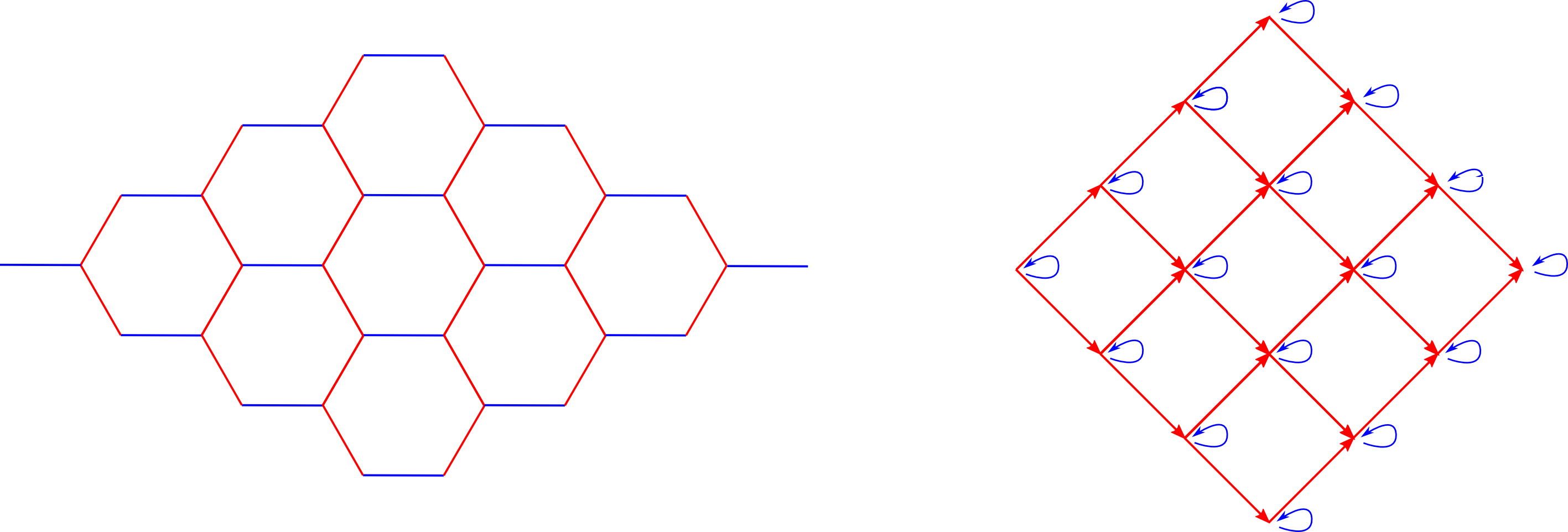}
  \caption{The lattice $G_n$ and the corresponding directed lattice $\widetilde{G_n}$.}
\label{fig:lattices}
\end{figure}

\begin{remark}\label{rm:mixed-model}
In the mixed model, all edges of the directed square lattice have weights $-1$. Since each $A_{u,u}$ was drawn independently from the Gamma distribution, each loop $u$ has a weight $w_u = \frac{1}{A_{u,u}}$ drawn independently from the inverse Gamma distribution $\Gamma^{-1}(\gamma)$ (Definition \ref{def:gamma}).
\end{remark}

\begin{remark}\label{rm:iid-model}
In the i.i.d. model, all edges of the hexagonal lattice have i.i.d. weights distributed as some random variable $X$. This implies that on the directed square lattice each edge weight $w_{u,v}=-A_{u,v}$ is distributed as $-X$ and each vertex weight $w_u = \frac{1}{A_{u,u}}$ is distributed as $\frac{1}{X}$. 
\end{remark}

\section{Probabilistic results}\label{sec:prob}

We now introduce the probabilistic part of the analysis for the square lattice with random edge and vertex weights. We do not require edge weights to be independent, only that for each path its edges are independent. The square lattice considered is the one from Figure \ref{fig:lattices} rotated $45$ degrees counterclockwise, so that the lower left corner is the point $(1,1)$ and the upper right corner is the point $(n,n)$. We fix some $k \geq 1$ and let $S_0=((1,1),(1,2),\dots,(1,k)), T_0=((n,n-k+1),(n,n-k+2),\dots,(n,n))$. 

For an edge $e$ let $X_e := \log \abs{w_{e}}$ and for a vertex $u$ let $X_u := \log \abs{w_{u}}$. We assume that $\E \log\abs{X_e}, \E \log\abs{X_u}\geq 0$. 
\begin{definition}\label{def:nonintersect-Z}
For the directed square lattice from $(1,1)$ to $(n,n)$, the non-intersecting partition function of order $k$ is defined as:
\[
Z^{(k)}_{n} := Z^{(k)}_{S_{0},T_{0}} = \sum_{\pi=(\pi_1,\dots,\pi_k)}\prod_{i=1}^{k}\wt(\pi_i)
\]
where the summation is over all tuples of $k$ vertex disjoint paths, with $\pi_i$ connecting $(1,i)$ to $(n,n-k+i)$.
\end{definition}

In this section, we prove two results. First, in Section \ref{sec:intersect}, for the case $\E X_e >0$ we show that up to order $n^{1/3}$ the product of $k$ top singular values of $A^{-1}_n$ is with high probability close the quantities $Z^{(k)}_n$ (Theorem \ref{th:mixed-topk}). By known results about fluctuations of the polymer partition function, this then implies (Theorem \ref{th:tracy--Widom}) Tracy--Widom fluctuations of the smallest singular value of $A_n$, for the weights drawn from the inverse Gamma distribution.

We shall rely on the following large deviation inequality \cite{durrett}. Pick a path $\pi$ of length at most $n$. Assuming that $\E e^{-tX_e} < \infty$ for some $t>0$, since $\E X_e \geq 0$ we have:
\begin{equation}\label{eq:ld1}
\Pp\left(\sum_{e \in \pi}X_e < -\delta n\right) \leq e^{-I(\delta) n} 
\end{equation}
for some rate function $I$.
Whenever we say that an event holds with high probability (w.h.p.), it will mean that the probability that it does not hold is superpolynomially small in $n$.

\subsection{Eigenvalues and non-intersecting partition functions}\label{sec:intersect}

The goal of this section is the proof of Proposition \ref{prop:intersect-aux}, which combined with the results from Section \ref{sec:eigen} implies  Theorem \ref{th:mixed-topk-intro} and Theorem \ref{th:mixed-topk}.

It will be convenient to work in the case when all vertex weights are equal to $1$. The proposition below shows that if we study the behavior of partition functions up to order $n^{1/3}$, we can do so without loss of generality. 
\begin{proposition}\label{prop:only-edges}
Suppose that for all vertices $u$ we have $\E e^{-tX_u} < \infty, \ \E e^{tX_u} < \infty$ for some $t > 0$ and likewise for edges. For an edge $(u,v)$ put $w'_{u,v} = w_{u,v} \cdot w_v$ and put $w'_u=1$ for all vertices $u$. Note that the primed edge weights are not independent, but they are still independent along every path. For any $k\geq 1, \delta >0$ and all $S,T$ we have:
\[
\Pp(n^{-1/3}\abs{\log \abs{Z^{(k)}_{S,T}} - \log \abs{Z^{(k)'}_{S,T}}} > \delta) \rightarrow 0
\]
\end{proposition}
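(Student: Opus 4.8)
The plan is to reduce the statement to an exact algebraic identity between $Z^{(k)}_{S,T}$ and $Z^{(k)'}_{S,T}$ that involves only the vertex weights at the starting points $S$, and then to dispose of the resulting $O(1)$ discrepancy by a crude moment bound.

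First I would compare the weights of a single path. Fix a path $\pi = (u_1 \to u_2 \to \dots \to u_m)$. By \eqref{eq:wt} together with the definitions $w'_u = 1$ and $w'_{u,v} = w_{u,v} w_v$, reindexing the product over edges (each edge $(u_i,u_{i+1})$ contributes the extra factor $w_{u_{i+1}}$, so the vertices $u_2,\dots,u_m$ get covered) gives
\[
\wt'(\pi) = \prod_{i=1}^{m-1} w_{u_i,u_{i+1}} \prod_{i=2}^{m} w_{u_i},
\qquad\text{hence}\qquad
\wt(\pi) = w_{u_1}\,\wt'(\pi);
\]
that is, the primed and unprimed weights of a path differ exactly by the weight $w_{u_1}$ of its \emph{initial} vertex (the empty path from $u$ to $u$ being the consistent case $\wt(\pi) = w_u = w_u\wt'(\pi)$). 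Consequently, for any $k$-tuple $\pi = (\pi_1,\dots,\pi_k)$ of vertex-disjoint paths with $\pi_i$ emanating from the $i$-th vertex $u_i$ of $S$, one has $\prod_{i=1}^k \wt(\pi_i) = \big(\prod_{u \in S} w_u\big)\prod_{i=1}^k \wt'(\pi_i)$, with the same associated permutation $\sigma(\pi)$ and hence the same sign. Summing over all such tuples yields $Z^{(k)}_{S,T} = \big(\prod_{u \in S} w_u\big)\, Z^{(k)'}_{S,T}$, with the prefactor depending on $S$ only.

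Since each $w_u \neq 0$ almost surely, the two partition functions vanish simultaneously a.s.; on the event that both vanish we read the difference of logarithms as $0$, and on its complement
\[
\Big|\log\abs{Z^{(k)}_{S,T}} - \log\abs{Z^{(k)'}_{S,T}}\Big| = \Big|\sum_{u\in S} X_u\Big| .
\]
The right-hand side is a sum of exactly $k$ terms with $k$ fixed, and the hypothesis $\E e^{tX_u}, \E e^{-tX_u} < \infty$ (uniformly over vertices, as holds in both models of Section~\ref{sec:intro}, where the vertex weights are i.i.d.) gives $\Pp(\abs{X_u} > M) \le C e^{-tM}$. A union bound then yields
\[
\Pp\Big(\Big|\sum_{u\in S} X_u\Big| > \delta n^{1/3}\Big) \le \sum_{u\in S}\Pp\big(\abs{X_u} > \tfrac{\delta}{k} n^{1/3}\big) \le k C e^{-t\delta n^{1/3}/k} \longrightarrow 0 ,
\]
uniformly in $S$, which is the assertion.

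There is no substantial obstacle: the only points needing care are the index bookkeeping in the first step — making sure it is the initial vertex weight that is left uncovered, given the convention $w'_{u,v} = w_{u,v}w_v$ — and the degenerate case where the partition functions vanish for purely combinatorial reasons. Note that independence of the primed edge weights plays no role here: the identity for $Z^{(k)}$ is purely algebraic and the final bound survives any dependence among the $X_u$ (indeed, finite means of the $X_u$ would already suffice); the ``independent along every path'' remark in the statement is needed only later, when \eqref{eq:ld1} is invoked.
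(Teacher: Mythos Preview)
Your proof is correct and follows essentially the same route as the paper: establish the exact identity $Z^{(k)}_{S,T} = \big(\prod_{u\in S} w_u\big)\,Z^{(k)'}_{S,T}$, then control $\sum_{u\in S} X_u$ by the exponential-moment tail bound and a union bound. You supply more detail than the paper in deriving the identity and in handling the possible vanishing of the partition functions, but the argument is the same.
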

\begin{proof}
We have $Z^{(k)}_{S,T} = Z^{(k)'}_{S,T}\cdot \prod_{u \in S}w_u$, so $\log \abs{Z^{(k)}_{S,T}} = \log \abs{Z^{(k)'}_{S,T}} + \sum_{u \in S}\log \abs{w_u}$. Since $\E e^{-tX_u},  \E e^{tX_u} < \infty$ for some $t > 0$, by Markov inequality for each $w_u$ we have:
\[\Pp(\abs{\log \abs{w_u}} > \delta n^{1/3}) < C e^{-t\delta n^{1/3}}\]
for some constant $C>0$. By union bounding over polynomially many choices of $S$ we can assume that w.h.p. for all choices of $S$ we have $\sum_{u \in S}\abs{\log \abs{w_u}} < Ck\delta n^{1/3}$, which finishes the proof. 
\end{proof}

Below we assume that all vertex weights are equal to $1$. For a pair of sequences $S=(s_1,\dots,s_k),T=(t_1,\dots,t_k)$ and a permutation $\sigma$, we let $\Pi_{\sigma,S,T}$ denote the set of all tuples of  paths $\pi=(\pi_1,\dots,\pi_k)$ with $\pi_i$ connecting $s_i$ to $t_{\sigma(i)}$. We let $\Pi^{n.i.}_{\sigma,S,T}$ denote the set of all such tuples with paths $\pi_i$ non-intersecting. For a tuple $\pi$ we let $E(\pi)$ denote the set of edges used by paths in $\pi$ (if an edge is used by multiple paths we count it once).  Recall that $S_0 = ((1,1),\dots,(1,k))$ and $T_0 = ((n,n-k+1),\dots,(n,n))$. The set of all non-intersecting tuples 
contributing to $Z^{(k)}_n$ is $\Pi^{n.i.}_{\mathrm{id},S_0,T_0}$. 

For $\pi \in \Pi_{\sigma,S,T}$, let $\wt(\pi) :=\prod_{i=1}^{k}\wt(\pi_i)$. Recall that:
\[
Z^{(k)}_{S,T} = \sum_{\sigma}\sgn(\sigma)Z_{\sigma,S,T}
\]
where:
\[
Z_{\sigma,S,T} := \sum_{\pi \in \Pi^{n.i.}_{\sigma,S,T}}\wt(\pi)
\]

The proof of Proposition \ref{prop:intersect-aux} will follow from the lemma below, which is purely combinatorial and whose proof we defer to Section \ref{sec:comb}. The lemma roughly says that any nonintersecting tuple connecting $S$ to $T$ can be modified into a nonintersecting tuple connecting $S_0$ to $T_0$ while removing only a constant number of edges and adding a constant number of path segments.


\begin{lemma}\label{lm:comb-everything}
There exists a constant $C$ depending only on $k$ such that for any $\sigma, S, T$ with $S \neq S_0$ or $T \neq T_0$ there exists a set of paths $\mathcal{P}$ which has size at most $C \cdot n^C$ and satisfies the following property. For every $\pi \in \Pi^{n.i.}_{\sigma,S,T}$ there exist a tuple $\pi' \in \Pi^{n.i.}_{\mathrm{id},S_0,T_0}$ such that $\abs{E(\pi) \backslash E(\pi')} \leq C$ and $E(\pi)\triangle E(\pi')$ is a union of paths whose number is at most $C$ and which are all elements of $\mathcal{P}$.
\end{lemma}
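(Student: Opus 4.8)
The plan is to prove Lemma~\ref{lm:comb-everything} by an explicit surgery argument: given a non-intersecting tuple $\pi \in \Pi^{n.i.}_{\sigma,S,T}$, I will cut off a bounded number of edges near the two endpoint regions and splice in short ``repair'' path-segments that reroute the tuple so that it starts at $S_0$ and ends at $T_0$, and so that the permutation becomes the identity. First I would observe that in the directed square lattice all non-loop edges point ``up-right'', so every path $\pi_i$ from $s_i$ to $t_{\sigma(i)}$ is an up-right lattice path; in particular the ordering of a non-intersecting $k$-tuple is preserved along any anti-diagonal it crosses, which already forces strong rigidity. Since $S$ consists of $k$ distinct vertices and likewise $T$, and the endpoints live in an $n\times n$ box, there is a bounded-size region near the bottom-left corner and one near the top-right corner outside of which all the action is ``generic''. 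The idea is: truncate each $\pi_i$ at the first anti-diagonal (say the $(k{+}C_0)$-th from the bottom, for a constant $C_0$ depending only on $k$) and the last such anti-diagonal near the top, keep the long ``middle'' portion untouched, and then attach fixed connector paths running from $S_0$ up to the points where the truncated middles begin, and symmetrically at the top from where the middles end down to $T_0$.

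The key steps, in order, are: (1) Set up coordinates and show each path is up-right, so a non-intersecting tuple induces, on each anti-diagonal it meets, a monotone (order-preserving) labeling of its $k$ strands; deduce that $\sigma$ is determined in a controlled way and that after the surgery near the corners one can always arrange the induced permutation at the ``gluing anti-diagonals'' to be the identity (this uses that $S_0$ and $T_0$ are themselves consecutive points on an anti-diagonal, so any monotone bijection to them is forced to be order-preserving, killing the sign issue). (2) Choose constants $C_0, C_1$ (functions of $k$ only) so that the ``core'' of $\pi$ --- the portion between anti-diagonal $C_0$ and anti-diagonal $2n-C_1$ --- is untouched; the number of edges of $\pi$ lying outside the core is at most a constant times $k$, giving the bound $\abs{E(\pi)\setminus E(\pi')}\le C$. (3) For the bottom corner: the core of strand $i$ enters at some lattice point $p_i$ on anti-diagonal $C_0$, with the $p_i$ monotone along that anti-diagonal and all lying within a bounded-size window; connect $(1,i)\in S_0$ to $p_i$ by a fixed, canonically chosen non-intersecting family of up-right paths (possible because both sets are monotone on nearby anti-diagonals and the box is bounded, so there are only finitely many configurations and for each we fix once and for all a non-crossing connector system). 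Do the symmetric thing at the top with $T_0$. (4) Verify that the resulting $\pi'$ is non-intersecting (the new connectors are non-crossing by construction and they only meet the core at the single gluing points, which are automatically consistent because both use the same monotone labeling) and that $\pi' \in \Pi^{n.i.}_{\mathrm{id},S_0,T_0}$. (5) Bound $\mathcal{P}$: each connector segment is a path inside a bounded box attached to a lattice point $p_i$ ranging over an anti-diagonal of length $O(n)$, so there are $O(n)$ choices of attachment point and $O(1)$ connector shapes, giving $|\mathcal{P}| \le C\cdot n^C$; and $E(\pi)\triangle E(\pi')$, being (old corner pieces) $\cup$ (new connector pieces), is a union of at most $C$ such paths.

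The main obstacle I expect is step~(1) combined with the end of step~(3)–(4): making the gluing \emph{canonical and consistent} so that no case analysis over the (linearly many) positions of $S$, $T$ and the core-entry points $p_i$ blows up the constant $C$ or the exponent in $|\mathcal{P}|$. The subtle point is that the entry points $p_i$ of the core on the gluing anti-diagonal can be spread out along that anti-diagonal (they need not be within a bounded window of each other if $S$ is, say, all of the bottom row), so one may actually need the gluing anti-diagonal to be chosen at a controlled distance and to route connectors that run a long way \emph{along} near-boundary anti-diagonals; handling the non-crossing requirement for these long connectors, while still keeping their total edge count inside $E(\pi)\triangle E(\pi')$ expressible as $O(1)$ paths from a set $\mathcal{P}$ of size $O(n^{O(1)})$, is where the real combinatorial care goes. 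I would resolve this by routing each connector to first travel along a fixed anti-diagonal band and only then turn toward its target, so that the connector is determined by just its two endpoints (hence $O(n^2)$ many, and the difference set is a union of $\le 2k$ such), and checking non-crossing reduces to the monotonicity established in step~(1).
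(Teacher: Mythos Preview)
Your surgery idea is natural for $k=1$ (and indeed the paper remarks that case is trivial), but for $k\ge 2$ there is a genuine gap: you assume that one can always attach non-intersecting connector paths from $S_0$ to the starts of the (truncated) middles without touching those middles. This fails already in simple examples. Take $k=2$ and $S=\{(1,n/2),\,(1,n/2+2)\}$, both in the first column and far from the origin. Truncation at any constant anti-diagonal leaves both paths untouched, so your ``entry points'' are $s_1,s_2$ themselves. But any up-right path from $(1,1)$ or $(1,2)$ to a target in column $1$ must lie entirely in column $1$; hence two such connectors necessarily share edges, and no non-intersecting connector system exists. More generally, the obstacle you flag (the $p_i$ being spread along one anti-diagonal) is not the real one: the real problem is that some $s_i$ may sit far above any fixed gluing line, and then the connector to that $s_i$ can be forced to cross the middle of another strand (or another connector). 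Your step~(4), ``the new connectors are non-crossing by construction and they only meet the core at the single gluing points'', is exactly the unproved assertion, and the example shows it can be false. Choosing instead a gluing anti-diagonal above all of $S$ fixes existence of the $p_i$ but then the discarded portion of $\pi$ can have order $n$ edges, destroying the bound on $|E(\pi)\setminus E(\pi')|$.

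The paper's argument is organized quite differently and is built precisely to handle this. Rather than keeping the middles untouched and gluing at a fixed anti-diagonal, it works one missing vertex of $S_0$ at a time: connect $(1,m)$ to some $s_j\in S\setminus S_0$ by a \emph{simple} (at most one turn) path $\tau$, prepend $\tau$ to $\pi_j$, and \emph{accept} that this creates intersections with the other $\pi_i$. The heart of the proof is then a sequence of local repairs---an \emph{uncrossing} step (rearrange strands so no two cross) followed by a \emph{pushing} step (shift an offending strand one unit down-right along each overlap)---which restore non-intersection. The crucial observation making the edge-loss constant is that all intersections, and hence all repairs, occur inside a fixed-radius neighbourhood $N(\tau)$ of the simple path $\tau$; any path confined to such an $L$-shaped strip can make only $O_k(1)$ turns, so each push deletes only $O_k(1)$ edges, and the set $\mathcal P$ of possible repair segments (all paths lying in $N(\tau)$) has polynomial size. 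This local-repair mechanism---modifying the existing strands, not just the connectors---is exactly what your plan is missing; without it you cannot simultaneously guarantee that $\pi'$ is non-intersecting and that $|E(\pi)\setminus E(\pi')|$ is bounded.
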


We note that the lemma is obvious in the case $k=1$, since it suffices to connect $S=\{s\}$ to $(1,1)$ and $T=\{t\}$ to $(n,n)$ with any two fixed paths.

\begin{proposition}\label{prop:intersect-aux}
For any fixed $k \geq 1$ and $\delta > 0$, if $\E X_e > 0$ and $\E e^{-tX_e} < \infty, \ \E e^{tX_e} < \infty$ for some $t > 0$, we have:
\[
\Pp\left(\frac{1}{n^{1/3}}\abs{\log \abs{Z^{(k)}_n} - \max_{S,T}\abs{\log Z^{(k)}_{S,T}}} > \delta \right) \rightarrow 0
\]
\end{proposition}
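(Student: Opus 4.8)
The plan is to prove the two inequalities $\log\abs{Z^{(k)}_n}\le\max_{S,T}\log\abs{Z^{(k)}_{S,T}}$ and $\max_{S,T}\log\abs{Z^{(k)}_{S,T}}\le\log\abs{Z^{(k)}_n}+o(n^{1/3})$, both with high probability. The first is immediate since $Z^{(k)}_n=Z^{(k)}_{S_0,T_0}$, so all the work is in the second, and by a union bound over the $\mathrm{poly}(n)$ choices of $(S,T)$ it is enough to prove $\log\abs{Z^{(k)}_{S,T}}\le\log\abs{Z^{(k)}_n}+o(n^{1/3})$ w.h.p.\ for each fixed pair. Since all vertex weights are $1$ (Proposition \ref{prop:only-edges}) and the paths of a non-intersecting tuple are edge-disjoint, $\abs{\wt(\pi)}=\prod_{e\in E(\pi)}\abs{w_e}=e^{\sum_{e\in E(\pi)}X_e}$. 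Setting $\widehat Z_{\sigma,S,T}:=\sum_{\pi\in\Pi^{n.i.}_{\sigma,S,T}}\abs{\wt(\pi)}$ and $\widehat Z^{(k)}_n:=\widehat Z_{\mathrm{id},S_0,T_0}$, we have $\abs{Z^{(k)}_{S,T}}\le\sum_\sigma\widehat Z_{\sigma,S,T}$, while $\abs{Z^{(k)}_n}=\widehat Z^{(k)}_n$ because all tuples in $\Pi^{n.i.}_{\mathrm{id},S_0,T_0}$ use the same number of edges, so the terms of $Z^{(k)}_n$ do not cancel (in the mixed model every edge weight is negative, making this exact). Thus I would reduce the proposition to showing, for each fixed $\sigma$ and each $(S,T)\neq(S_0,T_0)$, that $\widehat Z_{\sigma,S,T}\le\mathrm{poly}(n)\,e^{o(n^{1/3})}\,\widehat Z^{(k)}_n$ w.h.p.

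Fix $(\sigma,S,T)\neq(S_0,T_0)$ and let $\mathcal P$ and the assignment $\pi\mapsto\pi'\in\Pi^{n.i.}_{\mathrm{id},S_0,T_0}$ be as in Lemma \ref{lm:comb-everything}. The key structural point is that this assignment is $Q(n)$-to-one with $Q$ a polynomial: a preimage $\pi$ of a given $\pi'$ is recovered by specifying the at most $C$ edge-disjoint elements $P_1,\dots,P_m\in\mathcal P$ whose union is $E(\pi)\triangle E(\pi')$ and then the at most $C$ edges of $\bigcup_j P_j$ that lie in $E(\pi)$, because then $E(\pi)=\big(E(\pi')\setminus\bigcup_j P_j\big)\sqcup\big(E(\pi)\cap\bigcup_j P_j\big)$ and a non-intersecting tuple with prescribed endpoints is determined by its edge set; this gives $Q(n)\le(C+1)^2\abs{\mathcal P}^C(2Cn)^C=\mathrm{poly}(n)$. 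Grouping $\widehat Z_{\sigma,S,T}$ by the value of $\pi'$ yields $\widehat Z_{\sigma,S,T}\le Q(n)\big(\max_\pi\tfrac{\abs{\wt(\pi)}}{\abs{\wt(\pi')}}\big)\widehat Z^{(k)}_n$, and since the $P_j$ are edge-disjoint and $E(\pi)\setminus E(\pi')\subseteq\bigcup_j P_j$ has at most $C$ edges, a short computation gives
\[
\frac{\abs{\wt(\pi)}}{\abs{\wt(\pi')}}=\exp\Big(2\sum_{e\in E(\pi)\setminus E(\pi')}X_e\ -\ \sum_{j=1}^m\sum_{e\in P_j}X_e\Big).
\]

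It then remains to control these two sums on the good events (i) $\abs{X_e}\le\delta n^{1/3}$ for every edge $e$ of the lattice, and (ii) $\sum_{e\in P}X_e\ge-\delta n^{1/3}$ for every $P$ occurring as an element of $\mathcal P$ for some $(\sigma,S,T)$ (a family of polynomial size). On (i) the first sum is at most $2C\delta n^{1/3}$, and on (ii) the second satisfies $-\sum_j\sum_{e\in P_j}X_e\le C\delta n^{1/3}$, so the displayed ratio is at most $e^{3C\delta n^{1/3}}$; hence $\widehat Z_{\sigma,S,T}\le Q(n)e^{3C\delta n^{1/3}}\widehat Z^{(k)}_n$ w.h.p., and summing over the at most $k!$ permutations, maximizing over $(S,T)$, and using $\abs{Z^{(k)}_n}=\widehat Z^{(k)}_n$, $Q(n)k!=e^{o(n^{1/3})}$ and that $\delta>0$ is arbitrary finishes the proof. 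Event (i) fails with superpolynomially small probability by $\E e^{\pm tX_e}<\infty$ and a union bound over $O(n^2)$ edges. Event (ii) is the crux and the main obstacle: here $P$ may have length $\Theta(n)$ (since $\pi'$ runs corner to corner while $\pi$ need not), so one cannot compare $\abs{\wt(\pi)}$ with a single $\abs{\wt(\pi')}$ up to $e^{o(n^{1/3})}$ — but $\widehat Z^{(k)}_n$ is a \emph{sum} over all $\pi'$, and, crucially, since $\E X_e>0$ a long path has $\sum_{e\in P}X_e$ concentrated near $\abs{P}\,\E X_e>0$, so going below $-\delta n^{1/3}$ is a large-deviation event. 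Quantitatively I would split on the length: for $\abs{P}\le cn^{1/3}$ a Chernoff bound with the exponent from $\E e^{-tX_e}<\infty$ gives failure probability $\le e^{-t\delta n^{1/3}/2}$, while for longer $P$ the Bernstein inequality for $\sum_{e\in P}(X_e-\E X_e)$, keeping the positive drift $\abs{P}\,\E X_e$ inside the deviation, gives failure probability $\le e^{-c'n^{1/3}}$; a union bound over the $\mathrm{poly}(n)$ relevant paths then establishes (ii) with superpolynomially small failure probability.
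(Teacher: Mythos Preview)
Your proof is correct and follows essentially the same approach as the paper's: both invoke Lemma~\ref{lm:comb-everything} to map each $\pi$ to some $\pi'\in\Pi^{n.i.}_{\mathrm{id},S_0,T_0}$, bound the weight ratio $|\wt(\pi)|/|\wt(\pi')|$ on the two high-probability events (a per-edge bound and a per-path lower bound over $\mathcal P$), exploit that the map $\pi\mapsto\pi'$ has only polynomially many preimages, and sum. You are more explicit than the paper about the preimage count and the large-deviation estimate for event (ii), and you correctly flag that the identity $|Z^{(k)}_n|=\widehat Z^{(k)}_n$ relies on constant-sign edge weights (as in the mixed model) --- an assumption the paper also uses implicitly when summing \eqref{eq:wtpi}.
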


\begin{proof}
We need to prove that for any $k\geq 1$  and $\delta > 0$, we have w.h.p. for some global constant $D$ depending only on $k$:
\[
\abs{Z^{(k)}_n} \geq \max_{S,T}\abs{Z^{(k)}_{S,T}}\cdot e^{-D\delta n^{1/3}}
\]
Since:
\[
Z^{(k)}_{S,T} = \sum_{\sigma}\sgn(\sigma)Z_{\sigma,S,T}
\]
we have:
\[
\abs{Z^{(k)}_{S,T}} \leq k! \cdot \max_{\sigma}\abs{Z_{\sigma,S,T}}
\]
Thus, it suffices to prove that with high probability for all $\sigma, S, T$ we have:
\begin{equation}\label{eq:intersect-main}
\abs{Z^{(k)}_n} \geq \abs{Z_{\sigma,S,T}}\cdot e^{-D\delta n^{1/3}}
\end{equation}
Consider a tuple $\pi \in \Pi^{n.i.}_{\sigma,S,T}$ contributing to $Z_{\sigma,S,T}$. By Lemma \ref{lm:comb-everything} there exists $\pi' \in \Pi^{n.i.}_{\mathrm{id},S_0,T_0}$ such that $\abs{E(\pi) \backslash E(\pi')} \leq C$ and $E(\pi)\triangle E(\pi')$ is a union of at most $C$ paths which are elements of $\mathcal{P}$.

Let $A$ be the event that all edges of the lattice have weights at most $e^{t\delta n^{1/3}}$ and let $B$ the event that all paths in $\mathcal{P}$ have weights at least $e^{-\delta n^{1/3}}$. Note that both the number of edges in the lattice and the number of paths in $\mathcal{P}$ are polynomial in $n$. The large deviation inequality \ref{eq:ld1} together with union bound over a polynomial size family of events guarantees that $B$ holds with high probability. Likewise, by Markov inequality and union bound over all edges of the lattice the event $A$ also holds with high probability. Note that the events $B$ depends only on $\mathcal{P}$, which depends only on $\sigma,S,T$ and not on the tuple $\pi$.

Let $E(\pi)\triangle E(\pi') = \cup_{i=1}^{m}\pi_{i}$, where each path $\pi_i$ belongs to $\mathcal{P}$ and $m \leq C$. Since $A$ and $B$ hold w.h.p., we have with high probability:
\begin{equation}\label{eq:wtpi}
\wt(\pi') \geq \wt(\pi)\cdot e^{-(tC+C)\delta n^{1/3}}
\end{equation}



We now need to sum equation \eqref{eq:wtpi} over all paths $\pi \in \Pi^{n.i.}_{\sigma,S,T}$. The map taking $\pi$ to $\pi'$ need not be injective. However, note that $E(\pi) \triangle E(\pi')$ is a union of at most $C$ paths, all of which lie inside $\mathcal{P}$, which has size at most $C \cdot n^{C}$. Thus, each $\pi'$ will have at most polynomially many preimages. Thus, summation of equation \eqref{eq:wtpi} over all possible $\pi \in \Pi^{n.i.}_{\sigma,S,T}$ proves the desired inequality \eqref{eq:intersect-main}.
\end{proof}


\begin{remark}\label{rm:meanzero-intersect}
In the case $\E X_e = 0$, the logarithmic weight of a typical path is of order $\sqrt{n}$ and the proof does not apply. However, the same proof can be used to obtain a weaker statement, namely, for any $\varepsilon > 0$:
\[
\Pp\left(\frac{1}{n^{1/2 + \varepsilon}}\abs{\log \abs{Z^{(k)}_n} - \max_{S,T}\abs{\log Z^{(k)}_{S,T}}} > \delta \right) \rightarrow 0
\]
\end{remark}

By combining Proposition \ref{prop:intersect-aux} and Theorem \ref{th:singular}, we arrive at the following theorem.

\begin{theorem}\label{th:mixed-topk}
For any fixed $k \geq 1$ and $\delta > 0$, if $\E X_e > 0$ and $\E e^{-tX_e} < \infty, \ \E e^{tX_e} < \infty$ for some $t > 0$, we have: 
\[
\Pp\left(n^{-1/3}\abs{\log\prod_{i=1}^{k}\sigma_i(A^{-1}) - \log \abs{Z^{(k)}_n}} > \delta \right) \rightarrow 0
\]
\end{theorem}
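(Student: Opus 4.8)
The plan is to obtain this as a direct consequence of the two results immediately preceding it, with no additional input. Theorem~\ref{th:singular} provides the deterministic sandwich
\[
\max_{S,T}\abs{Z^{(k)}_{S,T}}\ \le\ \prod_{i=1}^{k}\sigma_i(A^{-1})\ \le\ \binom{n}{k}^{2}\max_{S,T}\abs{Z^{(k)}_{S,T}},
\]
while Proposition~\ref{prop:intersect-aux} controls $\log\abs{Z^{(k)}_n}$ against the optimum over $(S,T)$. So first I would take logarithms in the sandwich: since $\log$ is increasing, $\log\max_{S,T}\abs{Z^{(k)}_{S,T}}=\max_{S,T}\log\abs{Z^{(k)}_{S,T}}$, and it becomes
\[
\max_{S,T}\log\abs{Z^{(k)}_{S,T}}\ \le\ \log\prod_{i=1}^{k}\sigma_i(A^{-1})\ \le\ \max_{S,T}\log\abs{Z^{(k)}_{S,T}}+2\log\binom{n}{k}.
\]
The correction satisfies $2\log\binom{n}{k}\le 2k\log n=o(n^{1/3})$, so
\[
n^{-1/3}\Bigl\lvert\,\log\prod_{i=1}^{k}\sigma_i(A^{-1})-\max_{S,T}\log\abs{Z^{(k)}_{S,T}}\,\Bigr\rvert\ \longrightarrow\ 0
\]
deterministically.

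Second, I would invoke Proposition~\ref{prop:intersect-aux} — or rather the slightly stronger statement its proof yields, namely that with high probability $\abs{Z^{(k)}_n}\ge\max_{S,T}\abs{Z^{(k)}_{S,T}}\,e^{-D\delta n^{1/3}}$; combined with the trivial reverse bound $\abs{Z^{(k)}_n}\le\max_{S,T}\abs{Z^{(k)}_{S,T}}$ (valid since $(S_0,T_0)$ is one of the competing pairs), this gives
\[
\Pp\Bigl(n^{-1/3}\bigl\lvert\,\log\abs{Z^{(k)}_n}-\max_{S,T}\log\abs{Z^{(k)}_{S,T}}\,\bigr\rvert>\delta\Bigr)\ \longrightarrow\ 0
\]
for every fixed $\delta>0$. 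Adding the two estimates via the triangle inequality then yields
\[
\Pp\Bigl(n^{-1/3}\bigl\lvert\,\log\prod_{i=1}^{k}\sigma_i(A^{-1})-\log\abs{Z^{(k)}_n}\,\bigr\rvert>\delta\Bigr)\ \longrightarrow\ 0,
\]
which is the claim.

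There is no substantive obstacle here; the theorem is a formal combination of Theorem~\ref{th:singular} and Proposition~\ref{prop:intersect-aux}. The only thing requiring a moment's care is the bookkeeping with the $(S,T)$ optimum: Theorem~\ref{th:singular} naturally produces the logarithm of a maximum of absolute values, whereas Proposition~\ref{prop:intersect-aux} is phrased with a maximum of absolute values of logarithms, and one should check these agree up to $o(n^{1/3})$ — which follows from monotonicity of $\log$, from the fact that $(S_0,T_0)$ realizes $Z^{(k)}_n$, and from the observation that under $\E X_e>0$ the relevant partition functions have size $e^{\Theta(n)}$ with high probability, so their logarithms are nonnegative and the two conventions coincide. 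Using the one-sided bound that comes directly out of the proof of Proposition~\ref{prop:intersect-aux}, as in the second paragraph, bypasses even this point.
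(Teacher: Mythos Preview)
Your proof is correct and matches the paper's own argument, which simply states that the theorem follows by combining Theorem~\ref{th:singular} with Proposition~\ref{prop:intersect-aux}. Your careful remark about the mismatch between $\max_{S,T}\lvert\log Z^{(k)}_{S,T}\rvert$ in the statement of Proposition~\ref{prop:intersect-aux} and $\log\max_{S,T}\lvert Z^{(k)}_{S,T}\rvert$ coming out of Theorem~\ref{th:singular} is well taken, and your resolution via the one-sided inequality actually established in the proof of Proposition~\ref{prop:intersect-aux} is exactly the right way to handle it.
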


We now proceed to prove Theorem \ref{th:mixed-topk-intro}. To this end, let us first note the following properties of the inverse Gamma distribution.

\begin{definition}\label{def:gamma}
A random variable $X$ has inverse Gamma distribution with parameter $\gamma > 0$, denoted $\Gamma^{-1}(\gamma)$, if its probability distribution is supported on positive reals with density:
\[
\Pp(X \in dx) = \frac{1}{\Gamma(\gamma)}x^{-\gamma-1}\exp\left(-\frac{1}{x}\right)dx
\]
\end{definition}

\begin{remark}\label{rm:gamma}
Let $X \sim \Gamma^{-1}(\gamma)$ and let $\Psi$ be the digamma function. Then $\E\log X = - \Psi(\gamma)$ and $\Var\log X = \Psi'(\gamma)$. In particular, from the properties of the digamma function, if we let $\gamma^{\ast}=1.461632...$ to be the unique real positive root of $\Psi$, for all $\gamma < \gamma^{\ast}$ we have $\E\log X > 0$. Also, for small enough $t>0$ we have $\E e^{-t\log X} < \infty, \E e^{t\log X} < \infty$.
\end{remark}

\begin{proof}[Proof of Theorem \ref{th:mixed-topk-intro}]
By Remark \ref{rm:mixed-model}, in the mixed model the dual graph corresponds to the directed square lattice with inverse Gamma vertex weights. By Proposition \ref{prop:only-edges}, if instead we put the weights on the edges, the difference between the partition functions of the vertex weighted model and the edge weighted model, scaled by $n^{-1/3}$, converges to zero in probability. By Remark \ref{rm:gamma}, for $\gamma < \gamma^{\ast}$ the inverse Gamma logarithmic edge weights satisfy the assumptions of Theorem \ref{th:mixed-topk}, so it holds also for the vertex weighted case. The proof follows by invoking Lemma \ref{lm:dual}. 
\end{proof}

\subsection{Fluctuations of the smallest eigenvalue in the exactly solvable case}\label{sec:fluctuations}

We now establish the fluctuations of the smallest eigenvalue in the mixed model (Theorem \ref{th:main}). Let us recall the definition of the discrete log-Gamma polymer \cite{log-gamma}. Let $\Gamma_n$ be the square lattice where each vertex has a weight $w_u$ drawn independently from $\Gamma^{-1}(\gamma)$ and all edges have weight $1$. The log-Gamma polymer partition function is then:
\[
Z_n = \sum_{\pi: (1,1) \rightarrow (n,n)}\prod_{u \in \pi}w_u
\]
By Remark \ref{rm:mixed-model}, the lattice $G_n$ obtained in the mixed model is the same as in the log-Gamma polymer, except that edges have weights $-1$ instead of $1$. However, since every path $\pi: (1,1) \rightarrow (n,n)$ has an even number of edges, the partition functions of the two models will be equal.

\begin{theorem}\label{th:tracy--Widom}
Let $\sigma_{n}(A_n)$ be the smallest singular value of the adjacency matrix $A_n$ of the lattice $G_n$. Then for all $\gamma < \gamma^{\ast}$, where $\gamma^{\ast}$ is the unique positive root of the digamma function $\Psi$, we have:
\[
\Pp\left(\frac{-\log \sigma_n(A_n) - \bar{f}_{\gamma}n}{n^{1/3}} \leq r\right) \rightarrow F_{\mathrm{GUE}}\left(\left(\frac{\bar{g}_{\gamma}}{2}\right)^{-1/3}r\right)
\]
where $\bar{f}_{\gamma} = -2 \Psi(\gamma / 2)$, $F_{GUE}$ is the GUE Tracy-Widom distribution function and $\bar{g}_{\gamma} = - 2 \Psi''(\gamma / 2)$.
\end{theorem}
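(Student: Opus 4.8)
The plan is to identify $-\log\sigma_n(A_n)$, up to an error of order $o(n^{1/3})$, with $-\log Z_n$ for $Z_n$ the log-Gamma polymer partition function, and then to feed $-\log Z_n$ into the known Tracy--Widom asymptotics for the log-Gamma free energy \cite{jeremy}, closing the argument with a Slutsky-type statement. I would first note that $\widetilde A_n$ is a.s.\ invertible: in the mixed model every vertex of the directed lattice $\widetilde G_n$ carries a loop of weight $1/A_{u,u}$ with $A_{u,u}\sim\Gamma(\gamma,1)\ne 0$ a.s., so invertibility follows from the acyclicity argument of Section~\ref{sec:eigen}. Consequently $A_n$ is invertible, and by Lemma~\ref{lm:dual} its spectrum is $\{\pm\sigma_i(\widetilde A_n)\}$; since $A_n$ is symmetric, the smallest positive eigenvalue $\lambda_n$ of $H_n$, the smallest singular value of $A_n$, and $\sigma_{\min}(\widetilde A_n)$ all coincide. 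In particular $\lambda_n=\sigma_n(A_n)$.

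Next, in the mixed model one has $w_{u,v}=-A_{u,v}=1$ and $w_u=1/A_{u,u}\sim\Gamma^{-1}(\gamma)$, so for every path $\wt(\pi)=\prod_{u\in\pi}w_u>0$; hence $Z^{(1)}_n$ is positive and equals the log-Gamma polymer partition function $Z_n=\sum_{\pi:(1,1)\to(n,n)}\prod_{u\in\pi}w_u$ with inverse-Gamma vertex weights (the $-1$ edge weights of the mixed model are irrelevant, every such path having an even number of edges). Since $\gamma<\gamma^{\ast}$, Theorem~\ref{th:mixed-topk-intro} with $k=1$ applies and yields
\[
n^{-1/3}\bigl|\log\sigma_n(A_n)+\log Z_n\bigr| \longrightarrow 0 \quad\text{in probability.}
\]
The role of the bound $\gamma<\gamma^{\ast}$ here is exactly to guarantee the hypothesis $\E\log w_u=-\Psi(\gamma)>0$ of Theorem~\ref{th:mixed-topk}, via Remark~\ref{rm:gamma}, which also supplies the required two-sided exponential moments.

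Finally I would invoke the free-energy fluctuation theorem for the log-Gamma polymer \cite{jeremy}: for the point-to-point polymer with inverse-Gamma parameter $\gamma$ from $(1,1)$ to $(n,n)$,
\[
\Pp\!\left(\frac{-\log Z_n-\bar f_\gamma n}{n^{1/3}}\le r\right)\longrightarrow F_{\mathrm{GUE}}\!\left(\Bigl(\tfrac{\bar g_\gamma}{2}\Bigr)^{-1/3}r\right),\qquad \bar f_\gamma=-2\Psi(\gamma/2),\quad \bar g_\gamma=-2\Psi''(\gamma/2),
\]
which is valid for all $\gamma>0$. Writing $W_n=(-\log Z_n-\bar f_\gamma n)/n^{1/3}$ and $Y_n=(-\log\sigma_n(A_n)-\bar f_\gamma n)/n^{1/3}$, the display of the previous paragraph gives $Y_n-W_n\to 0$ in probability, so by Slutsky's lemma $Y_n$ converges in distribution to the same limit as $W_n$; since the GUE Tracy--Widom distribution function is continuous, the distribution functions converge at every $r$, which is the assertion of the theorem.

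The only step requiring genuine care is matching normalizations with \cite{jeremy}: one must check that the quantity whose rescaled fluctuations obey the GUE Tracy--Widom law is precisely the diagonal, point-to-point log-Gamma free energy with no boundary sources, and that in that normalization the law-of-large-numbers and fluctuation constants are $-2\Psi(\gamma/2)$ and $-2\Psi''(\gamma/2)$ — the argument being $\gamma/2$ rather than $\gamma$ because the characteristic (critical) parameter attached to the main diagonal is half the weight parameter. All the real mathematical content, however, lies outside this proof: in the combinatorial Lemma~\ref{lm:comb-everything} underlying Theorem~\ref{th:mixed-topk}, and in the integrable-probability input \cite{jeremy}. The reason the two fit together is that Theorem~\ref{th:mixed-topk} pins the discrepancy between eigenvalues and polymer partition functions down to $o(n^{1/3})$, just sharp enough to be invisible at the $n^{1/3}$ scale of the Tracy--Widom limit.
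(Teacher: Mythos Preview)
Your approach is the same as the paper's: identify $Z^{(1)}_n$ with the log-Gamma partition function, use the $k=1$ case of Theorem~\ref{th:mixed-topk} (equivalently Theorem~\ref{th:mixed-topk-intro}) to get the $o(n^{1/3})$ comparison, cite \cite{jeremy}, and close with Slutsky. There is, however, a sign slip that breaks the Slutsky step as written. The input from \cite{jeremy} (equation~\eqref{eq-gue} in the paper) has $\log Z_n$, not $-\log Z_n$, in the numerator: $Z_n$ grows like $e^{\bar f_\gamma n}$ with $\bar f_\gamma=-2\Psi(\gamma/2)>0$ for $\gamma<\gamma^\ast$, so $-\log Z_n-\bar f_\gamma n\sim -2\bar f_\gamma n\to-\infty$ and cannot have a nondegenerate limit after dividing by $n^{1/3}$. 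Concretely, with your definitions $Y_n-W_n=\bigl(-\log\sigma_n(A_n)+\log Z_n\bigr)/n^{1/3}$, while the display you established says $\bigl(\log\sigma_n(A_n)+\log Z_n\bigr)/n^{1/3}\to 0$; these differ by $2\log Z_n/n^{1/3}$, which diverges. The fix is immediate: set $W_n=(\log Z_n-\bar f_\gamma n)/n^{1/3}$, and then $Y_n-W_n=-\bigl(\log\sigma_n(A_n)+\log Z_n\bigr)/n^{1/3}\to 0$.

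A smaller slip: in the mixed model $w_{u,v}=-A_{u,v}=-1$, not $+1$ (Remark~\ref{rm:mixed-model}); you correct yourself two lines later when you note that the $-1$ edge weights cancel along every path of even length, so this does not affect the argument.
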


\begin{proof}
By Theorem 2.1 of \cite{jeremy}, we have for any $\gamma > 0$:
\begin{equation}\label{eq-gue}
\Pp\left(\frac{\log Z_n - \bar{f}_{\gamma}n}{n^{1/3}} \leq r\right) \rightarrow F_{\mathrm{GUE}}\left(\left(\frac{\bar{g}_{\gamma}}{2}\right)^{-1/3}r\right)
\end{equation}
For $\gamma < \gamma^{\ast}$, the assumptions of Theorem \ref{th:mixed-topk} for $k=1$ are satisfied, so the random variable $n^{-1/3}\abs{\log\sigma_1(A_{n}^{-1}) - \log \abs{Z_n}}$ converges to $0$ in probability. The proof follows by noting that $\sigma_{n}(A_n)=\frac{1}{\sigma_{1}(A_n^{-1})}$.
\end{proof}

Theorem \ref{th:main} follows from Theorem \ref{th:tracy--Widom} by invoking Lemma \ref{lm:dual} and noting that in the mixed model, the dual graph corresponds exactly to the lattice $G_n$ from Theorem \ref{th:tracy--Widom}.

\begin{remark}\label{rm:iid-model-polymer}
Note that the i.i.d. model also corresponds to a polymer model on the lattice, where the vertex and edge weights are independent given as in Remark \ref{rm:iid-model} and the weight of a path is the product of the weights of all edges and vertices it contains. By Proposition \ref{prop:only-edges}, one can consider a model with weights only on the edges which in this case satisfies $\E X_e=0$. If one could establish Proposition \ref{prop:intersect-aux} in the $\E X_e=0$ case and a result analogous to Theorem 2.1 of \cite{jeremy} for such an i.i.d. polymer, these would imply that an analogue of Theorem \ref{th:tracy--Widom}, and therefore Theorem \ref{th:main}, holds also for the i.i.d. model.
\end{remark}

\section{Appendix: combinatorial results on non-intersecting paths}\label{sec:comb}

The goal of this section is to prove Lemma \ref{lm:comb-everything}. 

A naive approach to the proof would be as follows. Given sets $S,T$ and a nonintersecting tuple $\pi$ connecting $S$ to $T$, we would like to connect each point in $S$ to a point in $S_0$, and likewise for $T$ and $T_0$, with some path so as to obtain a nonintersecting tuple connecting $S_0$ to $T_0$. However, it is easy to give examples where this cannot be done, e.g. when one of the points in $S$ is cut off from the origin by some paths in $\pi$. Therefore, a more careful approach is needed.

We start with an informal outline of the proof. Observe that if $\pi$ contains intersecting paths, one can switch them to make them noncrossing with each other (see Figure \ref{fig:uncross}). One can then push the paths away from each other to remove the intersections (see Figure \ref{fig:pushaway}). However, in the process we can lose an unbounded number of edges (e.g. consider two zigzag paths touching each other at each corner). The key idea to prevent this is to connect every missing vertex from $S_0$ to a vertex from $S$ using a path $\tau$ which is simple, i.e. goes only right and then up. All modifications of paths will occur in a neighborhood of $\tau$ of fixed radius. Since $\tau$ is simple, every path in such a neighborhood can make only a bounded, independent of $n$, number of turns, which in turn guarantees that we always remove only a bounded number of edges.

The above approach is formalized as follows. Lemma \ref{lm:uncrossing} states that every tuple of paths can be uncrossed. Lemma \ref{lm:pushing} states that noncrossing paths can be pushed away to make them nonintersecting while losing only a bounded number of edges. For technical reasons this requires ensuring that paths intersect properly (Definition \ref{def:proper}), which is handled by Lemma \ref{lm:clean}. Finally, Lemma \ref{lm:intersect-main-comb} combines the previous lemmas to inductively connect missing points from $S_0$ to points from $S\backslash S_0$. The lemma requires that there are no points in $S$ too close to the boundary, which is ensured by Lemma \ref{lm:bottom}. The section ends with the proof of Lemma \ref{lm:comb-everything}.

For a path $\tau$ we let $N(\tau)$ be the neighborhood of $\tau$ of radius $6k$, i.e. the set of all vertices within $\ell_1$ distance at most $6k$ of some vertex on $\tau$. For a sequence $S$ of initial vertices let $\ordST(S) := \min\{i \ \vert \ (1,i) \notin S \}$ be the $y$ coordinate of the lowest vertex $(1,i)$ not contained in $S$ and likewise for a sequence $T$ of terminal vertices let $\ordST(T) := \max\{i \ \vert \ (n,i) \notin T \}$. 

\begin{definition}\label{def:proper}
For two paths, $\pi_1$ from $s$ to $t$ and $\pi_2$ from $s'$ to $t'$, such that $\pi_1 \cap \pi_2 \neq \emptyset$ we say that they {\bf intersect properly} if $s',t' \notin \pi_1$ and $s,t \notin \pi_2$.  
\end{definition}

We define a partial order on the vertices of the lattice, letting $(x,y) > (x',y')$ if $y > y'$ and $x < x'$. Consider two paths $\pi_1, \pi_2$. If there exist vertices $x \in \pi_1, y \in \pi_2$ such that $x > y$ and for all $x' \in \pi_1, y' \in \pi_2$ if $x'$ and $y'$ are comparable, then also $x' > y'$, we will write $\pi_1 \succ \pi_2$.  We write $\pi_1 \succeq \pi_2$ if $\pi_1 \succ \pi_2$ or $\pi_1 = \pi_2$.

\begin{definition}
A crossing intersection between $\pi_1$ and $\pi_2$ is a maximal connected subset of vertices $C \subseteq \pi_1 \cap \pi_2$ such that $\pi_1$ enters $C$ from the left and exits from the right and $\pi_2$ enters $C$ from the bottom and exits up. We say that $\pi_1$ and $\pi_2$ are {\bf crossing} if $\pi_1 \cap \pi_2$ contains at least one crossing intersection. We call a tuple of paths noncrossing if none of the paths in the tuple are crossing.
\end{definition}

\begin{remark}\label{rm:comparable}
If $\pi_1$ and $\pi_2$ intersect properly and are not crossing, then $\pi_1 \succ \pi_2$ or vice versa.
\end{remark}

\begin{lemma}\label{lm:uncrossing}
Fix $\sigma, S,T$ and consider a tuple of paths $\pi=(\pi_1,\dots,\pi_k)$, where $\pi_i$ connects $s_i$ to $t_{\sigma(i)}$. There exists a permutation $\sigma'$ and a tuple $\pi'=(\pi'_1,\dots,\pi'_k)$, where $\pi'_i$ connects $s_i$ to $t_{\sigma'(i)}$, such that $\pi'$ uses exactly the same multiset edges as $\pi$ and paths in $\pi'$ are noncrossing. Moreover, there is no pair $j < j'$ such that $\pi_{j'} \prec \pi_{j}$. We will say that the tuple $\pi'$ is obtained by {\bf uncrossing} $\pi$.
\end{lemma}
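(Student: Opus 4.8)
The plan is to proceed by induction on the total number of crossing intersections in the tuple $\pi$, producing at each step a new tuple using the same multiset of edges but with strictly fewer crossings, while maintaining control over the ordering of the paths. First I would set up the basic swapping move: if $\pi_i$ and $\pi_j$ have a crossing intersection at a maximal connected set $C$, then since $\pi_i$ enters $C$ from the left and leaves to the right while $\pi_j$ enters from the bottom and leaves upward, I can reroute by letting the new $\pi_i$ follow the old $\pi_i$ up to $C$, then follow the old $\pi_j$ after $C$, and symmetrically for the new $\pi_j$. Because both old paths pass through every vertex of $C$ (it is a connected subset of $\pi_i \cap \pi_j$ that both traverse), the swap is well-defined as an exchange of the two tails, and it uses exactly the same multiset of edges: the edges strictly before $C$ and strictly after $C$ on each path are simply redistributed, and the edges inside $C$ are unchanged. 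The endpoints get permuted: $\pi_i$ now ends at $t_{\sigma(j)}$ and $\pi_j$ at $t_{\sigma(i)}$, so the new permutation is $\sigma$ composed with the transposition $(i\,j)$.

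The main point is that this swap can only decrease the number of crossing intersections. For the pair $(i,j)$ itself, the swap removes the crossing at $C$; the tails after $C$ are exchanged, so any crossing between $\pi_i$ and $\pi_j$ that occurred entirely after $C$ (in the partial-order sense) is now a crossing of $\pi_i$ with $\pi_i$ or $\pi_j$ with $\pi_j$, i.e.\ vanishes, and one must check (using that $C$ is a crossing intersection, so the $\pi_i$-part lies to the lower-left and the $\pi_j$-part to the upper-right of $C$ in the appropriate sense) that no new crossing between the two is created. For a third path $\pi_\ell$, the new $\pi_i$ is a concatenation of a sub-path of the old $\pi_i$ and a sub-path of the old $\pi_j$, so a crossing of the new $\pi_i$ with $\pi_\ell$ was already a crossing of the old $\pi_i$ or old $\pi_j$ with $\pi_\ell$; hence the total count over all pairs does not increase, and it strictly decreases because of the pair $(i,j)$. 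Iterating, after finitely many swaps we reach a noncrossing tuple $\pi'$, still using the same multiset of edges, with endpoint permutation $\sigma'$ obtained from $\sigma$ by a product of transpositions.

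For the final clause — no pair $j<j'$ with $\pi_{j'} \prec \pi_j$ — I would, after reaching a noncrossing tuple, additionally relabel the paths: among noncrossing paths that intersect properly, Remark \ref{rm:comparable} gives that any two comparable paths are $\succ$-ordered, and one argues that $\succ$ is (after the relabeling) consistent with a linear order, so we may reindex the paths so that $i<j$ implies not $\pi_j \prec \pi_i$. This reindexing only permutes which source $s_i$ is attached to which path, which is absorbed into a further adjustment of $\sigma'$; it does not change the multiset of edges or the noncrossing property. The one delicate point here is that $\succ$ as defined is only a partial relation on paths (two paths with no comparable pair of vertices are incomparable), so I would argue that incomparable paths can be ordered arbitrarily without violating the required condition, since the condition only forbids an inversion relative to $\prec$, and $\prec$ does not hold between incomparable paths.

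I expect the main obstacle to be the bookkeeping in the second paragraph: verifying rigorously that the tail-swap at a crossing intersection does not create a new crossing between the swapped pair, and that it does not increase crossings with third paths. This requires being careful about the geometry of a crossing intersection $C$ — that the portion of $\pi_i$ before $C$ together with the portion of $\pi_j$ after $C$ is "monotone across $C$" in a way that precludes re-crossing — and about the case where $\pi_i$ and $\pi_j$ have several crossing intersections, so that one must choose, say, an extremal $C$ (minimal in the partial order) to swap at. A clean way to handle this is to always pick $C$ to be a crossing intersection that is minimal with respect to the partial order on vertices, swap there, and check that the swap strictly reduces the count; the extremality is what rules out new crossings appearing "below" $C$.
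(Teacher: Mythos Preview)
Your approach is the classical tail-swapping argument, which is different from the paper's. The paper does not induct on the number of crossings; instead it inducts on $k$. Given the multiset of edges, it labels the edges in each anti-diagonal strip from southeasternmost to northwesternmost, picks out the unique path $\rho$ using only label-$0$ edges (the ``lowest'' path), removes it, and applies the inductive hypothesis to the remaining $k-1$ paths. Because $\rho$ uses only southeasternmost edges, it automatically cannot cross any other path, and it is automatically $\preceq$ every other path, so the ordering clause comes for free. This sidesteps all of the crossing bookkeeping you anticipate.

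Your strategy can certainly be made to work, but the argument as written has a concrete error. You claim that after swapping tails at $C$, ``any crossing between $\pi_i$ and $\pi_j$ that occurred entirely after $C$ \ldots\ is now a crossing of $\pi_i$ with $\pi_i$ or $\pi_j$ with $\pi_j$, i.e.\ vanishes.'' This is false: the new $\pi_i$ carries the old $\pi_j$-tail and the new $\pi_j$ carries the old $\pi_i$-tail, so a crossing between the two old tails becomes a crossing between the two new tails, not a self-crossing. The correct accounting is that the swap removes exactly the crossing at $C$ and leaves all other $\pi_i$/$\pi_j$ crossings intact (with roles swapped after $C$); for a third path $\pi_\ell$, crossings are merely redistributed between the pairs $(i,\ell)$ and $(j,\ell)$, so the global total drops by one. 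With this correction your induction goes through, though one still has to be careful when some $\pi_\ell$ passes through $C$ itself.

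Your handling of the final ordering clause by relabeling is fine, and indeed the paper's proof also reorders the tuple at the end; the lemma statement is slightly loose about whether the index $i$ is tied to the original $s_i$.
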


\begin{proof}
We will prove the following statement, from which the lemma follows. Consider any sequences $S,T$ of vertices of length $k$ and a multiset of edges $P$ such that for every vertex not in $S \cup T$ the number of ingoing edges is equal to the number of outgoing edges and the total number of edges outgoing from $S$ minus the number of edges ingoing into $S$ is $k$. We claim that there exists a tuple of paths $\pi=(\pi_1,\dots,\pi_k)$ and a permutation $\sigma$ such that $\pi_j$ connects $s_j$ to $t_{\sigma(j)}$, the tuple $\pi$ uses exactly the edges from $P$ and the paths $\pi_j$ are noncrossing.

We proceed by induction with respect to $k$. For $k=1$ the statement is obvious. Suppose we have already proved the lemma for $k-1$. For $i=2,\dots,2n$ let $L_i$ be the line $x+y=i$. In each strip between $L_i$ and  $L_{i+1}$ we list the edges from $P$ from the southeasternmost one to the northwesternmost one. We assign each edge a label equal to the number of edges preceding it (multiple copies of an edge are assigned consecutive numbers). Consider the northeasternmost vertex $s \in S$ among all vertices from $x \in S$ such that the diagonal line started at $x$ and going southeast intersects no other vertices used by the edges from $P$. There exists a path $\rho$ starting at $s$ and ending at some $t \in T$ which uses only edges labelled $0$. Indeed, suppose this was not the case and follow the edges labelled $0$ starting from $s$. Take the first vertex $x$ such that there is no outgoing edge from $x$ with label $0$. This implies there is a vertex $y$ southeast of $x$ with an outgoing edge labelled $0$. The vertex $y$ must belong to $S$ and is to the northeast of $s$, contradicting the choice of $s$.

After removing $\rho$, by the inductive hypothesis the remaining edges can be arranged into a noncrossing tuple of paths $\widetilde{\pi} = (\widetilde{\pi}_2,\dots,\widetilde{\pi}_{k})$ and the tuple $\widetilde{\pi}$ can be reordered in such a way that there is no pair $j < j'$ such that $\widetilde{\pi}_{j'} \prec \widetilde{\pi}_{j}$. The tuple $\pi'=(\rho,\widetilde{\pi}'_1,\dots,\widetilde{\pi}'_{k-1})$ is also noncrossing since $\rho$ used only edges labelled $0$, so it cannot cross any of the paths $\widetilde{\pi}'_{i}$. For the same reason the tuple $\pi'$ will be ordered in the desired way.
\end{proof}

\begin{figure}[h!]
  \centering
\includegraphics[scale=1]{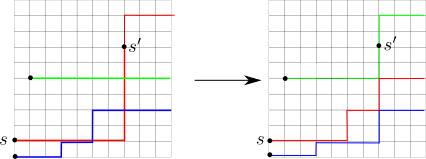}
  \caption{Switching paths to make them non-crossing. }
\label{fig:uncross}
\end{figure}

\begin{definition}\label{def:lowermost}
Let $\pi=(\pi_1, \dots,\pi_k)$ be a tuple of paths. Let $s,t$ be the starting and ending vertices of the path $\pi_j$ for some $j$. Let $P$ be the set of all paths that start at $s$, end at $t$ and use edges from the set $\cup_{j=1}^{k}\pi_j$. We define $\min(s,t;\pi)$ to be the smallest path, with respect to $\succ$, among all the paths in $P$.  
\end{definition}

Note that this is well defined since for any two paths $\rho,\rho' \in P$ there exists a path $\rho'' \in P$ such that $\rho \succeq \rho'', \rho' \succeq \rho''$. The path $\rho''$ can be obtained by uncrossing $\rho,\rho'$ (Lemma \ref{lm:uncrossing}) and taking the southeasternmost of the resulting paths. Since every set of paths in $P$ has a lower bound, there exists a unique smallest element.	


\begin{definition}
We say a path makes a turn at a vertex $v$ if it enters $v$ from the left and exits up or enters from below and exits right. We call a path $\tau$ {\bf simple} if it makes at most one turn.
\end{definition}

\begin{remark}\label{rm:ntau}
Let $\tau$ be a simple path. If a path is contained inside the neighborhood $N(\tau)$, the number of turns it makes is bounded by some constant $D$ depending only on $k$. This implies that the number of possible paths lying inside $N(\tau)$ is bounded by $D' \cdot n^{D''}$ for some constants $D',D''$ depending only on $k$.
\end{remark}

In the following lemmas we fix some $\sigma,S^{(0)},T^{(0)}$ and a tuple $\pi^{(0)} \in \Pi_{\sigma,S^{(0)},T^{(0)}}$. We also fix a simple path $\tau$ that starts at a vertex $(1,m)$ for some $m$. For any tuple $\pi=(\pi_1,\dots,\pi_k)$, a constant $C$ and any $0 \leq i \leq k$ we define the litions:
\begin{enumerate}[(1)]
\item \label{it:cond-nonintersect} for all $j \geq k-i+1$ the paths $\pi_{j}$ do not intersect any paths in $\pi$
\item \label{it:cond-succ}there are no $j,j'$ such that $j > j'$ and $\pi_{j} \prec \pi_{j'}$
\item \label{it:cond-bounded}$\abs{E(\pi^{(0)}) \backslash E(\pi)} \leq C$  
\item \label{it:cond-5i}all the vertices at which paths in $\pi$ intersect and all the edges in $E(\pi) \triangle E(\pi^{(0)})$ lie within distance $5(i+1)$ of $\tau$; also, the set $E(\pi) \triangle E(\pi^{(0)})$ is a union of at most $C$ paths  
\item \label{it:cond-1m} for all $j \leq m$ the vertex $(1,j)$ lies on exactly one path from $\pi$ and for all $j>m$ the vertex $(1,j)$ lies on at most one path from $\pi$
\end{enumerate}

\begin{lemma}\label{lm:clean}
Let $\pi \in \Pi_{\sigma,S,T}$ for some $\sigma,S,T$ be a noncrossing tuple that satisfies Conditions (\ref{it:cond-nonintersect})-(\ref{it:cond-1m}) for some $i$ and constant $C$. Then there exists a noncrossing tuple $\pi' \in \Pi_{\sigma',S',T'}$ for some $\sigma',S',T'$ that also satisfies Conditions (\ref{it:cond-nonintersect})-(\ref{it:cond-1m}) for $i$ and some constant $C'$ depending only on $C$ and $k$  such that all the paths in $\pi'$ intersect properly.
\end{lemma}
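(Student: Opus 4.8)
The plan is to transform $\pi$ into a properly-intersecting tuple by repeatedly performing local surgery near the few "bad" intersections — those where one path passes through an endpoint of another. Since we are told (Condition \eqref{it:cond-bounded}) that only a bounded number of edges differ from $\pi^{(0)}$, and (Condition \eqref{it:cond-5i}) that all intersection vertices lie in a narrow tube around the simple path $\tau$, the total number of endpoint-through-path incidences is bounded by a constant $D$ depending only on $k$: each path has two endpoints, there are $k$ paths, so at most $2k$ endpoints, and each can be "improperly hit" by at most $k-1$ other paths. So I would set up an induction on the number of improper incidences, showing each surgery strictly decreases this count while keeping the tuple noncrossing, keeping all the other Conditions intact (with the constant $C$ possibly growing by a bounded amount each step, hence bounded overall by some $C'=C'(C,k)$), and not creating new improper incidences elsewhere.

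The surgery itself: suppose $s'$, the start of $\pi_{j'}$, lies on $\pi_j$. Consider the portion of $\pi_j$ from $s'$ onward — call it $\rho$. Both $\pi_{j'}$ and $\rho$ emanate from $s'$. Using Lemma \ref{lm:uncrossing} (uncrossing applied locally to the pair, or rather to the sub-tuple), I can assume $\pi_{j'}$ and $\rho$ are noncrossing, so by Remark \ref{rm:comparable}-type reasoning one lies weakly below the other near $s'$. If $\pi_{j'} \succeq \rho$ (the start of $\pi_{j'}$ leaves $s'$ "below" where $\pi_j$ continues), I can instead route $\pi_j$ into $\pi_{j'}$'s exit edge and $\pi_{j'}$ into $\pi_j$'s old exit edge — i.e. swap the two outgoing edges at $s'$ — which makes $s'$ a genuine branch point belonging only to $\pi_{j'}$ and changes which terminal each path goes to (updating $\sigma$ to $\sigma'$). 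Crucially, swapping outgoing edges at a single vertex does not change the multiset of edges used, hence $E(\pi')=E(\pi)$ as multisets, so Conditions \eqref{it:cond-bounded}, \eqref{it:cond-5i}, \eqref{it:cond-1m} are preserved verbatim and no new intersections are created anywhere. The symmetric move handles the case where a terminal vertex of one path lies on another (swap incoming edges). The ordering Condition \eqref{it:cond-succ} may be violated by the swap, but I can restore it by relabeling the paths (which is allowed since $S',T'$ are arbitrary) and re-uncrossing via Lemma \ref{lm:uncrossing}, which only permutes the matching and does not alter edges.

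One subtlety I would need to address carefully: after a swap, could $s'$ now lie on some \emph{third} path $\pi_{j''}$, or could a terminal of $\pi_j$ or $\pi_{j'}$ now coincide badly? Since the edge multiset is unchanged, the set of vertices visited by the union is unchanged, and any vertex lying on some path still does; what changes is only the assignment of edges to paths at $s'$. So the count of improper incidences not involving $s'$ is untouched, and at $s'$ itself we have strictly reduced it (the endpoint is now "owned" properly). Thus the induction goes through. The one place to be genuinely careful is verifying that after finitely many such swaps the paths can still be chosen noncrossing and that Condition \eqref{it:cond-nonintersect} (the top $i$ paths remaining non-intersecting) is maintained — for this I would note that the surgeries only ever touch edges in the tube near $\tau$ and, if needed, restrict attention to swaps among the "active" lower-indexed paths so that the already-separated top ones are never involved. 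I expect the main obstacle to be this bookkeeping — tracking which Conditions survive each elementary swap and bounding the accumulated constant — rather than any single clever idea; the geometric content is just "swap outgoing (or incoming) edges at an offending endpoint," which is edge-multiset-preserving and therefore very clean.
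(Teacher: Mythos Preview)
Your swap does not remove the improper intersection. Suppose $s_{j'}$, the start of $\pi_{j'}$, lies on $\pi_j$. At $s_{j'}$ there is one incoming edge (belonging to $\pi_j$) and two outgoing edges (one for each path). After you swap the outgoing edges, the incoming edge is still there and must still be assigned to one of the two new paths; that path then passes through $s_{j'}$, which remains the start of the other path. So the pair still fails Definition~\ref{def:proper}. Concretely, take $\pi_1:(1,1)\to(2,1)\to(2,2)$ and $\pi_2:(2,1)\to(3,1)$: any decomposition of this edge multiset into two paths has $(2,1)$ as an interior vertex of the path from $(1,1)$ and simultaneously as the start of the other. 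Since preserving the edge multiset forces the same $S,T$ (they are determined by net in/out-degrees), no edge-multiset-preserving surgery can produce a properly intersecting tuple here; some edges must be discarded and an endpoint must move.

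The paper's proof does exactly that: it \emph{truncates} $\pi_{j'}$, replacing $s_{j'}$ by the first vertex $s'$ along $\pi_{j'}$ that lies on no other path (or, if none exists, collapsing $\pi_{j'}$ to a length-zero path at a fresh vertex in $N(\tau)$), thereby changing $S$. The initial segment from $s_{j'}$ to $s'$ is dropped, but most of its edges survive because they are shared with other paths; an edge $(x_n,x_{n+1})$ is truly lost only when $x_n$ lies on some $\rho\neq\pi_{j'}$ while $(x_n,x_{n+1})\notin\rho$, and then (by noncrossing) one of $\pi_{j'},\rho$ must turn at $x_n$. Since all intersections lie in $N(\tau)$ by Condition~(\ref{it:cond-5i}), Remark~\ref{rm:ntau} bounds the total number of such turns, hence of lost edges, by a constant $D=D(k)$. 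Repeating at most $2k$ times gives $C'=C+2k(D+1)$. The substantive step you did not reach is this turn-counting bound inside $N(\tau)$, which is what makes the truncation---and the accompanying change of $S$---affordable.
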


\begin{proof}
Let $(s_j,t_\sigma(j))$ be the pair of starting and terminal vertices of the path $\pi_j$. Suppose there are some $j \neq j'$ such that $s_{j'}$ lies on $\pi_j$ (the case of $t_{\sigma(j')}$ is dealt with analogously). Note that by Condition (\ref{it:cond-1m}) $s_{j'} \neq (1,y)$ for any $y$. Let $s'$ be defined as the first vertex on $\pi_{j'}$ which does not belong to any path other than $\pi_{j'}$. If $s'$ exists, let $\pi'_{j'}$ be the subpath of $\pi_{j'}$ started at $s'$. We replace $s_{j'}$ with $s'$ and $\pi_{j'}$ with $\pi'_{j'}$. If there is no $s'$, we replace both $s_{j'}$ and $t_{\sigma(j')}$ with any vertex in $N(\tau)$ southeast of $\pi_{j'}$ which does not belong to any path and take $\pi'_{j'}$ to be a path of length zero. Note that since $\pi_{j'}$ intersected some other path, by Condition (\ref{it:cond-nonintersect}) we must have $j' \geq i$, so to maintain Condition (\ref{it:cond-succ}) we can reorder the paths $\pi_j$ for $j \geq i$ and this does not violate Condition (\ref{it:cond-nonintersect}).

\begin{figure}[h!]
  \centering
\includegraphics[scale=1]{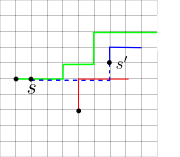}
  \caption{Lemma \ref{lm:clean}.}
\label{fig:step2}
\end{figure}

We claim that the resulting tuple of paths $\pi'$ uses all but boundedly many edges of $\pi_{j'}$, independent of $n$. Let:
\[
s_{j'} = x_0, x_1, \dots, x_l = s'
\] 
be the sequence of vertices on $\pi_{j'}$ between $s_{j'}$ and $s'$. Suppose that we lose an edge $(x_{n},x_{n+1})$ used by $\pi_{j'}$. Then for some path $\rho\in \pi$ we have $x_n \in \pi_{j'} \cap \rho$ and $x_{n+1} \notin \rho$ (otherwise the edge is still contained in $\rho$). All the paths are noncrossing, so, apart possibly of the first edge of $\pi_{j'}$, either $\pi_{j'}$ or $\rho$ makes a turn at $x_n$. By Condition (\ref{it:cond-5i}) the intersection at $x_n$, hence also the turn, happens inside $N(\tau)$. This implies that the total number of edges lost is at most the number of turns made by any path inside $N(\tau)$, which by Remark \ref{rm:ntau} is bounded by some constant $D$ independent of $n$, plus one. 

Since we have removed at most $D+1$ edges, Condition (\ref{it:cond-bounded}) still holds with the constant $C$ increased by $D+1$. The remaining conditions are clearly unchanged and the resulting tuple is still noncrossing. We can repeat the above step and after at most $2k$ such steps all the paths will intersect properly. The Condition (\ref{it:cond-bounded}) will be satisfied with $C' = C + 2k(D+1)$.
\end{proof}

\begin{lemma}\label{lm:pushing}
Let $\pi \in \Pi_{\sigma,S,T}$ for some $\sigma,S,T$ be a noncrossing tuple that satisfies Conditions (\ref{it:cond-nonintersect})-(\ref{it:cond-1m}) for some $i$ and constant $C$ and all intersecting paths in $\pi$ intersect properly. Then there exists a tuple $\pi' \in \Pi_{\sigma,S,T}$ that satisfies Conditions (\ref{it:cond-nonintersect})-(\ref{it:cond-1m}) for $i+1$ and some constant $C'$ depending only on $C$ and $k$.
\end{lemma}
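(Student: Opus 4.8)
The plan is to take the noncrossing, properly-intersecting tuple $\pi$ and remove all intersections that occur within distance $5(i+1)$ of $\tau$ by pushing the lower path of each intersecting pair one step to the southeast, while controlling the number of edges lost. By Remark \ref{rm:comparable}, since $\pi$ is noncrossing and intersecting paths intersect properly, any two intersecting paths $\pi_a, \pi_b$ are comparable in $\succ$; say $\pi_b \prec \pi_a$. The idea is to process intersections in order of the $\succ$-relation, always modifying the $\succ$-smaller path: at a maximal connected intersection component $C$ of $\pi_a \cap \pi_b$, reroute $\pi_b$ so that it detours just below/right of $C$ (replacing the shared segment by a parallel segment shifted one lattice step in the southeast direction), leaving $\pi_a$ untouched. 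Because intersections occur within $N(\tau)$ (Condition (\ref{it:cond-5i})) and $\tau$ is simple, each path inside $N(\tau)$ makes only boundedly many turns (Remark \ref{rm:ntau}), so there are only boundedly many such intersection components and each detour costs only boundedly many edges; pushing by one step keeps everything within distance $5(i+2) \le 5((i+1)+1)$ of $\tau$, giving Condition (\ref{it:cond-5i}) for $i+1$.

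The key steps, in order: (1) Enumerate the maximal connected intersection components among the paths $\pi_{k-i}, \dots, \pi_k$ (those not yet known to be disjoint) — by Conditions (\ref{it:cond-nonintersect}) and (\ref{it:cond-5i}) these all lie in $N(\tau)$, and by Remark \ref{rm:ntau} there are at most $D$ of them for a constant $D=D(k)$. (2) For each such component $C$, identify the two paths $\pi_a \succ \pi_b$ meeting along $C$ (comparability by Remark \ref{rm:comparable}), and reroute $\pi_b$ along a shifted parallel copy of the shared segment, chosen one step southeast; this removes at most a bounded number of edges per component (the shared segment has bounded length since a path in $N(\tau)$ turns boundedly often, or if it is long and straight the detour still only loses its two endpoints plus the turns). (3) Check that the rerouting does not create new crossings: since we push $\pi_b$ strictly to the southeast of $\pi_a$ and $\pi_b$ was already $\succ$-below $\pi_a$, the new path is still noncrossing with $\pi_a$, and by processing components in a consistent $\succ$-order (smallest first, or handling the total order on the strip) one avoids introducing crossings with the remaining paths. (4) Verify the conditions are restored: Condition (\ref{it:cond-nonintersect}) now holds for $i+1$ because the newly-pushed path no longer meets its partner; Condition (\ref{it:cond-succ}) is maintained since pushing $\pi_b$ further southeast only reinforces $\pi_b \prec \pi_a$ and we may reorder the last $i+1$ paths as in Lemma \ref{lm:clean}; Condition (\ref{it:cond-bounded}) holds with $C' = C + D \cdot (\text{bounded loss per component})$; Condition (\ref{it:cond-5i}) holds with radius $5(i+2)$ as noted; Condition (\ref{it:cond-1m}) is untouched because, by Condition (\ref{it:cond-1m}) for $\pi$, the rerouting happens away from the boundary column $x=1$, or more precisely the pushed segments do not involve the vertices $(1,j)$.

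The main obstacle I expect is making step (2)–(3) precise: showing that a single southeast push genuinely removes the intersection along $C$ \emph{without} creating a new intersection with a \emph{third} path $\pi_c$ that happens to pass through the narrow corridor just southeast of $C$. The resolution is that any such $\pi_c$ would have to be $\succ$-between $\pi_b$ and $\pi_a$ or below $\pi_b$; in the first case $\pi_c$ already intersected one of them and will be handled (or we push a whole bundle of paths together by the minimal amount needed, invoking $\min(s,t;\pi)$ from Definition \ref{def:lowermost} to canonically choose the lowest representative and push the entire stack), in the second case $\pi_c$ lies strictly southeast and the one-step push cannot reach it provided we have enough room — which is exactly why the neighborhood radius was chosen to be $6k$ rather than something tight, and why we only claim the weaker radius bound $5(i+1)$ in Condition (\ref{it:cond-5i}), leaving slack. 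One also has to be slightly careful that a path of length zero (which Lemma \ref{lm:clean} may produce) is handled trivially, as it intersects nothing. Once the bookkeeping of "push the minimal bundle by one step, losing boundedly many edges per intersection component, at most $D$ components" is set up, all five conditions follow by the same turn-counting argument used in Lemma \ref{lm:clean}.
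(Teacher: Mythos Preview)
Your overall strategy --- push the $\succ$-lower path one step to the southeast using the $\min$ construction of Definition~\ref{def:lowermost}, and bound the edge loss by counting turns inside $N(\tau)$ via Remark~\ref{rm:ntau} --- is exactly the paper's approach. The key technical device (shift $\pi_{k-i}$ by one southeast step, then replace $\pi_j$ by the $\min$ of $\pi_j$ and the shifted path) appears in both.

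That said, two things need correcting. First, your indexing is reversed: by Condition~(\ref{it:cond-nonintersect}) it is the paths $\pi_j$ with $j\geq k-i+1$ that are already disjoint from everything, so the intersections to be removed live among $\pi_1,\dots,\pi_{k-i}$, not among $\pi_{k-i},\dots,\pi_k$. Second --- and relatedly --- to pass from $i$ to $i+1$ you only need to make the single path $\pi_{k-i}$ disjoint from the others, not to eliminate all intersection components. The paper therefore fixes $\pi_{k-i}$, and for each $j<k-i$ with $\pi_j\cap\pi_{k-i}\neq\emptyset$ replaces $\pi_j$ by $\min(s_j,t_{\sigma(j)};(\pi_j,P\pi_{k-i}))$, where $P\pi_{k-i}$ is the whole path $\pi_{k-i}$ shifted one step southeast. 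This is cleaner than processing intersection components one at a time, and it sidesteps your ``third path'' worry in a precise way: the paper checks directly, via Condition~(\ref{it:cond-succ}) and the definition of $P\pi_{k-i}$, that the pushed $\pi'_j$ cannot meet any $\pi_l$ with $l\geq k-i+1$, while new intersections among the remaining $\pi_1,\dots,\pi_{k-i-1}$ are simply allowed (they do not violate Condition~(\ref{it:cond-nonintersect}) at level $i+1$ and will be handled at later inductive steps). Your sketch of ``pushing a whole bundle'' is unnecessary once you restrict attention to the single path $\pi_{k-i}$.
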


\begin{proof}
To pass from $i$ to $i+1$ we need to ensure Condition (\ref{it:cond-nonintersect}) for $i$. Suppose that $\pi_{k-i}$ intersects some path $\pi_j$ with $j < k-i$. For every such $j$, we do the following. Informally, we want to push the path $\pi_j$ right and down at the intersections to remove the intersections, see Figure \ref{fig:pushaway}. To make this notion precise, let $P\pi_{k-i}$ denote the path $\pi_{k-i}$ with the edges touching the bottom or right boundary are removed and the remaining edges shifted one vertex down and one vertex to the right. We let $\pi'_j := \min(s_j,t_{\sigma(j)};(\pi_j, P\pi_{k-i}))$, defined as in Definition \ref{def:lowermost}. The path $\pi'_j$  starts and ends at the same vertices as $\pi_j$. We claim that:
\begin{itemize}
\item $\pi'_j$ and $\pi_{k-i}$ do not intersect
\item $\pi'_{j} \cup \pi_{k-i} $ uses all but boundedly many edges of $\pi_j$, independent of $n$
\end{itemize}

The paths $\pi_{k-i}$ and $\pi_{j}$ intersect properly and are noncrossing, which by Remark \ref{rm:comparable} and Condition (\ref{it:cond-succ}) implies that $\pi_{k-i} \succ \pi_{j}$. Assume by contradiction that $\pi'_j \cap \pi_{k-i} \neq \emptyset$ and take $u=(x,y)$ to be the first vertex in $\pi'_j \cap \pi_{k-i}$. The paths go together from $u$ to some vertex $v=(x',y')$, possibly $v=u$, at which they exit to different vertices. We claim that $\pi_{k-i}$ enters $u$ from the left and $\pi'_{j}$ enters $u$ from the bottom. If not, the path $\pi'_{j}$ would contain a vertex $w$ to the left of $u$ and $\pi_{k-i}$ a vertex $w'$ below $u$, so $w > w'$. Since $\pi_{k-i} \succ \pi_j$, this would imply $(w,u) \notin \pi_j$, so the edge $(w,u)$ must come from $P\pi_{k-i}$, which is clearly impossible. In the same fashion we prove that leaves $v$ up and $\pi'_{j}$ leaves $v$ to the right. Let $\rho$ be the subpath of $\pi'_{j}$ connecting $(x,y-1)$ to $(x'+1,y')$. Let $P\rho$ be $\rho$ shifted one vertex down and right, so clearly $P\rho \subseteq P\pi_{k-i}$. By considering the path $\pi_{j}$ with the path connecting $(x,y)$ to $(x',y')$ replaced by $P\rho$, we reach a contradiction with the minimality of $\pi'_j := \min(s_j,t_{\sigma(j)};(\pi_j, P\pi_{k-i}))$. 



\begin{figure}[h!]
  \centering
\includegraphics[scale=1]{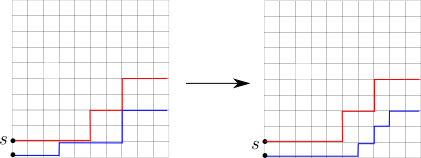}
  \caption{Pushing paths away to make them non-intersecting.}
\label{fig:pushaway}
\end{figure}
We now show that $\pi'_j \cup \pi_{k-i}$ use all but boundedly many edges of $\pi_j$. We first claim the only edges lost are the ones between some $u \in \pi_j \backslash \pi_{k-i}$ and its neighbor $v \in \pi_j \cap \pi_{k-i}$. Suppose there is a horizontal edge $(u,v) \in \pi_j \backslash (\pi_{k-i} \cup \pi'_{j})$ such that $u,v$ do not belong to $\pi_{k-i}$. Let $u',v'$ be the vertices $u,v$ shifted one right and down. It is easy to see that none of the vertices $u,v,u',v'$ are available for the path $\pi'_{j}$. Since $\pi_{j}\succ\pi'_{j}$, in order to cross the northwest-southeast diagonal line through $u$ the path $\pi'_{j}$ must pass through the set $\{x : x < u\}$ using a vertex belonging to $P\pi_{k-i}$. This would imply the existence of a vertex $w \in \pi_{k-i}$ such that $w < u$, contradicting $\pi_{k-i} \succ \pi_{j}$. The case of a vertical edge is handled in the same way.

Now, consider an edge lost between a vertex $u \in \pi_j \backslash \pi_{k-i}$ and its neighbor $v \in \pi_j \cap \pi_{k-i}$. Since the paths are noncrossing, one of the paths makes a turn at $u$ or $v$. Thus, the number of edges lost is bounded by twice the number of turns made by any path inside $N(\tau)$, which is bounded by some constant $D$ by Remark \ref{rm:ntau}. By the same reasoning, the set $E(\pi'_{j}) \backslash E(\pi_{j})$ is a union of a set of paths of size bounded by $2D$, since it consists of path segments starting or ending at vertices where one of the paths makes a turn.

We first check that by replacing $\pi_j$ with $\pi'_j$ we have not violated Condition (\ref{it:cond-nonintersect}). Suppose that for some $l \geq k-i+1$ the paths $\pi'_j$ and $\pi_{l}$ intersect at some vertex $x$. We must have $x \notin \pi_j$, as otherwise $\pi_j \cap \pi_{l} \neq \emptyset$, contradicting Condition (\ref{it:cond-nonintersect}). We must then have $x \in P\pi_{k-i}$, which means there is a vertex $x'\in \pi_{k-i}$ such that $x' > x$. Since the paths are noncrossing, by Remark \ref{rm:comparable} this would imply $\pi_{k-i} \succ \pi_{l}$, contradicting Condition (\ref{it:cond-succ}). 

We check that by replacing $\pi_j$ with $\pi'_j$ we have not violated Condition (\ref{it:cond-succ}). Suppose to the contrary that $\pi'_{j} \succ \pi_{l}$ for some $l > j$. This means that for some $x \in \pi'_{j}$ and $y \in \pi_{l}$ we have $x > y$. Since $\pi_j$ and $\pi'_j$ have the same starting and terminal vertices and $\pi_j \succeq \pi'_j$, the path $\pi_j$ must cross the region $\{z : z \geq x\}$, in particular, it contains a vertex $z$ such that $z \geq x > y$, proving that $\pi_j \succ \pi_{l}$ and contradicting Condition (\ref{it:cond-succ}). The proof that Condition (\ref{it:cond-succ}) holds for $l < j$ proceeds in the same fashion.

Note that after this procedure the path $\pi_{k-i}$ still intersects properly and is noncrossing with other paths that it intersects. We can thus repeat the above procedure for all $j < k-i$ such that $\pi_{k-i}$ and $\pi_j$ intersect, obtaining a new tuple $\pi'$ in which $\pi_{k-i}$ does not intersect any paths. The Condition (\ref{it:cond-nonintersect}) holds now for $i+1$. Condition (\ref{it:cond-bounded}) holds now for $i+1$ since we have removed at most $2kD$ edges and likewise for Condition (\ref{it:cond-5i}). Note that every path has been moved by distance at most $1$ and all the edges added or removed were within distance $5i$ of $\tau$. Since Condition (\ref{it:cond-5i}) held for $i$, this implies that now it holds for $i+1$. The resulting tuple $\pi'$ thus satisfies all the required conditions for $i+1$.
\end{proof}

\begin{lemma}\label{lm:intersect-main-comb}
Fix $\sigma,S,T$, with either $S \neq S_0$ or $T \neq T_0$ up to reordering. Suppose that $S \backslash S_0$ does not contain any vertices closer than $k+1$ to the bottom boundary and $T \backslash T_0$ does not contain any vertices closer than $k+1$ to the top boundary. There exists a simple path $\tau$ with the following properties. For any non-intersecting tuple $\pi \in \Pi^{n.i.}_{S,T,\sigma}$ there exist $\sigma', S', T'$ and a non-intersecting tuple $\pi' \in \Pi^{n.i.}_{S',T',\sigma'}$ such that $\ordST(S') > \ordST(S)$ (or, if $S=S_0$, $\ordST(T') < \ordST(T)$), $\abs{E(\pi) \backslash E(\pi')}$ is bounded independently of $n$ and $E(\pi)\triangle E(\pi')$ is a union of paths whose number is bounded independently of $n$ and which all lie within $N(\tau)$.
\end{lemma}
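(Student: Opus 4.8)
The plan is to construct the simple path $\tau$ explicitly so that it connects a suitably chosen ``available'' starting vertex of $\pi$ back to the lowest missing origin $(1,\ordST(S))$, and then run the machinery of Lemmas \ref{lm:uncrossing}, \ref{lm:clean}, and \ref{lm:pushing} in a neighborhood of $\tau$ to splice that connection into the tuple without destroying non-intersection or losing more than $O(1)$ edges. First I would set $m = \ordST(S)$ and identify, among the $k$ starting vertices $s_1,\dots,s_k$ of $\pi$, one that is not of the form $(1,y)$ — such a vertex exists because $S \neq S_0$ forces some $s_j$ off the left boundary below the missing slot (here the hypothesis that $S\setminus S_0$ has no vertex within $k+1$ of the bottom is what guarantees there is room, below $s_j$ and to its left, to route a simple path into $(1,m)$ and to park displaced path-endpoints). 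Call this vertex $s_j = (x_0,y_0)$. The path $\tau$ is then taken to be the simple (one-turn) path that goes right from $(1,m)$ to $(x_0,m)$ and then up (or down) to $(x_0,y_0)$; by the boundary hypothesis this stays inside the lattice and, being simple, forces every path inside $N(\tau)$ to make only $O(k)$ turns (Remark \ref{rm:ntau}).

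Next I would build the modified tuple in stages indexed by $i = 0,1,\dots,k$ exactly as set up before the statement of Lemma \ref{lm:clean}. At stage $0$: uncross $\pi$ via Lemma \ref{lm:uncrossing} to get a noncrossing tuple with no inversion $\pi_{j'} \prec \pi_j$ for $j<j'$, then prepend $\tau$ (more precisely, replace the chosen path $\pi_j$ by $\tau$ followed by $\pi_j$, or insert $\tau$ as the segment connecting the new origin $(1,m)$ into the existing web of edges), check that Conditions (1)–(5) hold for $i=0$ with some constant $C = C(k)$ — Condition (5) because all new edges lie on $\tau \subseteq N(\tau)$, Condition (3) because we have removed no edges yet, Condition (4)/(1) because after uncrossing any remaining intersections can be assumed to sit near $\tau$ after we also relocate the now-redundant endpoint of the old $\pi_j$ into a free vertex of $N(\tau)$ to the southeast. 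Then iterate: Lemma \ref{lm:clean} makes all intersecting paths intersect properly (cost: $O(k \cdot D)$ lost edges), and Lemma \ref{lm:pushing} promotes Condition (1) from $i$ to $i+1$ (again cost $O(kD)$ lost edges and $O(kD)$ new path-segments, all within distance $5(i+1)$ of $\tau$). After $i=k$, Condition (1) says every path is disjoint from every other, so the tuple $\pi'$ is non-intersecting; reading off its starting sequence $S'$, terminal sequence $T'$ and matching permutation $\sigma'$, we have $(1,m) \in S'$ so $\ordST(S') > \ordST(S) = m$ (or, in the case $S=S_0$, the symmetric construction on the top boundary gives $\ordST(T') < \ordST(T)$). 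Conditions (3) and (5), tracked through the $O(k)$ applications, give $\abs{E(\pi)\setminus E(\pi')} \le C'(k)$ and that $E(\pi)\triangle E(\pi')$ is a union of at most $C'(k)$ paths, all within $N(\tau)$ — which is the conclusion.

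The main obstacle I expect is \textbf{the initialization at stage $0$}: after uncrossing and inserting $\tau$, one must certify that all of Conditions (1)–(5) genuinely hold, and in particular that the intersections one cannot yet remove, together with the symmetric difference $E(\pi)\triangle E(\pi^{(0)})$, are confined to the fixed-radius neighborhood $N(\tau)$. The subtlety is that $\pi$ itself may intersect $\tau$ far from where we want; one has to argue that because $\tau$ is simple and $\pi$ is non-intersecting (so its own paths are mutually disjoint and each crosses any diagonal line $O(k)$ times), the portion of the web that $\tau$ actually disturbs is localized — the displaced endpoint of the old $\pi_j$ and the at most $O(k)$ crossings of $\tau$ with the existing paths all lie in a bounded tube around $\tau$, and nothing outside that tube changes. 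A second, more bookkeeping-flavored obstacle is verifying that the constant $C'$ produced by composing Lemma \ref{lm:clean} and Lemma \ref{lm:pushing} $k$ times really depends only on $k$ and not on $n$; this follows because each application increases the constant by an amount $\le 2k(D+1)$ with $D=D(k)$ from Remark \ref{rm:ntau}, and there are at most $2k$ applications, but it must be stated carefully. The case $S=S_0$ (so one has already fixed all origins and is now fixing terminals) is handled by the mirror-image construction with $\tau$ a simple path anchored at $(n,\ordST(T))$ on the top boundary.
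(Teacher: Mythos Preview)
Your plan is the paper's plan: choose a simple path $\tau$ from $(1,m)$ with $m=\ordST(S)$ to some $s_j\in S\setminus S_0$, prepend $\tau$ to $\pi_j$, then run the Conditions (\ref{it:cond-nonintersect})--(\ref{it:cond-1m}) induction by applying Lemmas \ref{lm:uncrossing}, \ref{lm:clean}, \ref{lm:pushing} in turn $k$ times. Two points need fixing.

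First, your claim that some $s_j$ must be ``off the left boundary'' is false: one can have $S=\{(1,1),\dots,(1,k-1),(1,k+10)\}$, entirely on the first column. The paper simply allows $s_j=(1,t)$ and then $\tau$ goes straight up; you should too (your ``right then up'' degenerates to ``up only''). The boundary hypothesis is used only to guarantee that any $s_j\in S\setminus S_0$ sits at height $>k\ge m$, so a right-then-up path from $(1,m)$ to $s_j$ exists.

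Second, you correctly flag initialization at $i=0$ as the obstacle, but you misidentify where it bites. Condition~(\ref{it:cond-5i}) is \emph{not} the problem: after prepending $\tau$ to $\pi_j$, every new intersection lies on $\tau$ itself, hence trivially within distance $5$ of $\tau$, and $E(\pi)\triangle E(\pi^{(0)})=\tau$. What can fail is Condition~(\ref{it:cond-1m}): the vertex $(1,m)$ now starts the new path $\tau\cdot\pi_j$, but it may also lie on another path $\pi_{j'}$ (necessarily the one starting at $(1,m-1)$, since $\pi$ was non-intersecting). You give no mechanism for this. The paper spends a paragraph on an explicit three-way case split --- if $s_j\neq(1,l)$ one swaps the initial segments of $\pi_j$ and $\pi_{j'}$ near $(1,m)$; if $s_j=(1,l)$ one further branches on whether the terminal vertex of $\pi_{j'}$ also lies in the first column --- to ensure $(1,m)$ lies on exactly one path before uncrossing. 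Without this, the hypotheses of Lemma~\ref{lm:clean} are not met and the induction cannot start. (Your phrase ``relocate the now-redundant endpoint of the old $\pi_j$'' does not correspond to anything that needs doing; after concatenation $s_j$ is an interior vertex, not an endpoint.) Also note the paper uncrosses \emph{after} concatenating and handling Condition~(\ref{it:cond-1m}), not before; uncrossing the original non-intersecting $\pi$ is a no-op.
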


\begin{proof}
We treat the case $S \neq S_0$, as the case of $T$ is analogous.

Let $(1,m)$ be the vertex in $S_0 \backslash S$ with smallest $m$. Pick the path $\tau$ to be the one connecting $(1,m)$ to any vertex $s_j \in S \backslash S_0$ and such that it goes right and then up (possibly only up if $s_j=(1,t)$ for some $t$). This is possible since we assume that no vertices in $S$ lie within the first $k$ levels from the bottom. We modify the set $S$ by removing $s_j$ and inserting $(1,m)$. We replace the path $\pi_j$, connecting $s_j$ to $t_{\sigma(j)}$, with $\tau\cdot\pi_j$, the concatenation of $\tau$ and $\pi_j$, which now starts at $(1,m)$. In this way, we obtain a new tuple that, in general, may contain intersecting paths.

We will construct the desired tuple $\pi'$ using an inductive procedure. We let $\pi^{(0)}:=\pi$. At step $0 \leq i \leq k$ we will have sets $S^{(i)},T^{(i)}$ and a possibly intersecting tuple $\pi^{(i)}=(\pi^{(i)}_1,\dots,\pi^{(i)}_k)$ connecting $S^{(i)}$ to $T^{(i)}$ and satisfying Conditions (\ref{it:cond-nonintersect})-(\ref{it:cond-1m}) for $i$. Applying Lemma \ref{lm:clean} and \ref{lm:pushing} will allow us to construct $\pi^{(i+1)}$ satisfying the conditions for $i+1$. 

To obtain $\pi^{(1)}$, we do the following. Condition (\ref{it:cond-1m}) clearly holds for vertices $(1,l)$ with $l < m$ since the initial tuple $\pi$ was nonintersecting. To ensure that it holds also for $(1,m)$, let $j$ be such that $(1,m)$ is the starting vertex of the path $\pi^{(0)}_{j}$ and suppose that $(1,m)$ also lies on a path $\pi^{(0)}_{j'}$ with $j \neq j'$. Because $\pi$ was nonintersecting, the path $\pi^{(0)}_{j'}$ must start at $(1,m-1)$. Recall that the path $\tau$, which is an initial segment of $\pi^{(0)}_{j}$, connects $(1,m)$ to the vertex $s_j$. 

If $s_j\neq(1,l)$ for any $l$, we switch paths so that the new path $\widetilde{\pi}^{(0)}_{j'}$ starts with the path $(1,m-1)\rightarrow (2,m-1) \rightarrow (2,m)$ and then follows the edges of $\pi^{(0)}_{j}$, while $\widetilde{\pi}^{(0)}_{j}$ starts at $(1,m)$ and follows the edges of $\pi^{(0)}_{j'}$. This is shown in Figure \ref{fig:special-case}.

If $s_j=(1,l)$ for some $l > m$, consider the endpoint $t$ of the path $\pi^{(0)}_{j'}$. If $t=(1,l')$ for some $l' \geq m$, we must have $l' < l$ because in the tuple $\pi$ the paths $\pi^{(0)}_{j'}$ and the subpath of $\pi^{(0)}_{j}$ starting at $s_j$ were nonintersecting. We replace the vertex $t$ by $(1,m-1)$ and replace $\pi^{(0)}_{j'}$ with a path of length zero. If $t \neq (1,l')$, let $(2,a)$ be the first vertex on $\pi^{(0)}_{j'}$ in the second column. We replace the segment of $\pi^{(0)}_{j'}$ connecting $(1,m-1)$ to $(2,a)$ with a path that first goes right and then up to $(2,a)$. 

\begin{figure}[h!]
  \centering
\includegraphics[scale=1]{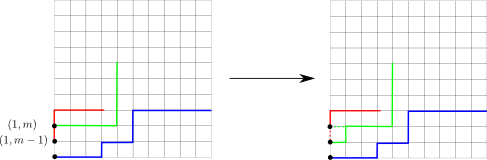}
  \caption{Switching paths $\pi^{(0)}_{j}$ and $\pi^{(0)}_{j'}$.}
\label{fig:special-case}
\end{figure}

We then apply Lemma \ref{lm:uncrossing} to make the tuple noncrossing and ensure that there are no $j' > j$  such that $\pi^{(0)}_{j'} \prec \pi^{(0)}_j$. The procedure above ensures that in all the cases considered the Conditions (\ref{it:cond-bounded}) and (\ref{it:cond-5i}) hold. In this way all the Conditions (\ref{it:cond-nonintersect})-(\ref{it:cond-1m}) for $i=0$ are satisfied.

To pass from $\pi^{(i)}$ to $\pi^{(i+1)}$, we apply the following steps. We first apply Lemma \ref{lm:uncrossing} to obtain a tuple $\pi^{(i)'}$ with all the paths non-crossing. Then, we apply Lemma \ref{lm:clean} to obtain a tuple $\pi^{(i)''}$ in which all the paths intersect properly. We can then apply Lemma \ref{lm:pushing}, obtaining a tuple $\pi^{(i)'''}$ which satisfies Conditions (\ref{it:cond-nonintersect})-(\ref{it:cond-1m}) for $i+1$. We can then put $\pi^{(i+1)}:=\pi^{(i)'''}$.

After performing the inductive procedure, we obtain a tuple $\pi^{(k)}$ that satisfies all the required conditions. We put $S':=S^{(k)}, T':=T^{(k)}, \pi':=\pi^{(k)}$. By Condition (\ref{it:cond-nonintersect}) the tuple is nonintersecting. By Condition (\ref{it:cond-bounded}) $\abs{E(\pi) \backslash E(\pi')}$ is bounded independently of $n$. By Condition (\ref{it:cond-5i}) $E(\pi)\triangle E(\pi')$ is a union of paths whose number is bounded independently of $n$ and which all lie within $N(\tau)$. Finally, since the vertices $(1,l)$ for $l\leq m$ were never removed, we have $\ordST(S') > \ordST(S)$.
\end{proof}

\begin{lemma}\label{lm:bottom}
For any $\sigma, S, T$ and a tuple $\pi \in \Pi^{n.i.}_{S,T,\sigma}$ there exist $\sigma', S', T'$ and a tuple $\pi' \in \Pi^{n.i.}_{S',T',\sigma'}$ such that $S' \backslash S_0$ does not contain any vertices closer than $k+1$ to the bottom boundary, $T' \backslash T_0$ does not contain any vertices closer than $k+1$ to the top boundary, $\abs{E(\pi) \backslash E(\pi')}$ is bounded independently of $n$ and $E(\pi)\triangle E(\pi')$ is a union of paths whose number is bounded independently of $n$ and which all lie within distance $k$ of the boundary.
\end{lemma}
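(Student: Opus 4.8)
The plan is to deal with the bottom boundary and then, by the reflection $(i,j)\mapsto(n+1-i,n+1-j)$ --- which interchanges the corners $(1,1)$ and $(n,n)$ and interchanges $S_0$ with $T_0$ --- with the top boundary. When $n$ is bounded in terms of $k$ the whole lattice has only $O(n^2)$ edges and there is nothing to prove, so assume $n$ large enough that the two regions below are disjoint. The key observation is geometric: under the $45^\circ$ rotation the ``levels'' of the hexagonal lattice become the anti-diagonals $\{i+j=\text{const}\}$ of the square, and the bottom boundary is the single corner $(1,1)$; hence the set $\Delta$ of vertices within distance $k$ of the bottom boundary is the triangle $\{(i,j):i+j\le k+2\}$, which contains $S_0$ and has only $\binom{k+2}{2}=O(k^2)$ vertices. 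Since $\Delta$ is a down-set for the coordinatewise order, every up-right path either avoids $\Delta$ --- if its start lies outside $\Delta$ --- or meets $\Delta$ in an initial segment of at most $|\Delta|$ vertices; in particular only the paths starting in $\Delta$ interact with it at all.

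The construction has two phases. In Phase 1, for every path $\pi_i$ that starts in $\Delta$ but eventually leaves it, truncate $\pi_i$ so that it begins at its first vertex $v_i$ outside $\Delta$; this deletes only the bounded initial segment of $\pi_i$ inside $\Delta$ --- a single path, each of whose edges is incident to a vertex of $\Delta$ --- and $v_i$ now lies at distance $\ge k+1$ from the bottom. After Phase 1 the paths still meeting $\Delta$ are exactly those lying entirely in $\Delta$, say $\gamma_1,\dots,\gamma_p$ with $p\le k$ and each $\gamma_j$ having at most $|\Delta|$ edges; these are also the only paths still occupying any vertex of $S_0$. In Phase 2, replace each $\gamma_j$ by the length-zero path sitting at $(1,j)\in S_0$, for $j=1,\dots,p$; this deletes all of $\gamma_j$ and creates no new edge, and since every $\gamma_j$ has been removed the vertices $(1,1),\dots,(1,p)$ lie on no remaining path, so the tuple stays vertex-disjoint. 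Each $\gamma_j$'s old terminal vertex lay in $\Delta$, hence far from the top boundary, so moving it to $(1,j)$ (also far from the top) is harmless on the $T$-side. Then run the mirror construction near the top, which touches the disjoint top region and reseats terminal vertices of collapsed paths in $T_0$.

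To conclude one checks that every operation only deletes edges and reseats length-zero paths at freed-up vertices, so $\pi'$ is again a non-intersecting tuple; that each start of $\pi'$ lies in $S_0$ (the Phase-2 vertices) or at distance $\ge k+1$ from the bottom (the Phase-1 vertices $v_i$ and the untouched far paths), with the symmetric statement for the terminal vertices, which gives the assertions on $S'$ and $T'$; and that $E(\pi)\setminus E(\pi')$ is a union of at most $2k$ paths --- at most one initial and one final segment deleted per original path --- totalling $O(k^3)$ edges, each incident to a vertex within distance $k$ of the boundary, while $E(\pi')\setminus E(\pi)=\emptyset$. I expect the only real obstacle to be pinning down the geometry of the first paragraph: once one sees that ``close to the bottom boundary'' means ``close to the corner $(1,1)$'', so that the region being altered is a bounded triangle rather than an unbounded horizontal strip (in the latter case the statement would actually be false), the rest is routine bookkeeping; the one delicate point is Phase 2, which relies on the facts that a path trapped in $\Delta$ must also terminate there, so its endpoint is free to move, and that such paths are the only ones still occupying vertices of $S_0$, so they can all be reseated at the bottom of $S_0$.
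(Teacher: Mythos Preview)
Your argument rests on reading ``the bottom boundary'' as the single corner $(1,1)$, so that the region to be cleared is the bounded triangle $\Delta=\{i+j\le k+2\}$. That is not the paper's meaning: throughout Section~\ref{sec:comb} (see the phrase ``no vertices in $S$ lie within the first $k$ levels from the bottom'' in the proof of Lemma~\ref{lm:intersect-main-comb}, and ``level $l$ from the bottom boundary'' in the paper's own proof of the present lemma) the bottom boundary is the side $y=1$ of the square, and ``closer than $k+1$'' means the horizontal strip $\{y\le k\}$. This is exactly what Lemma~\ref{lm:intersect-main-comb} needs: every $s_j\in S'\setminus S_0$ must have $y$-coordinate exceeding $k$ so that the simple path $\tau$ from $(1,m)$, $m\le k$, can reach it by going right and then up. A start such as $(n/2,1)$ is far from the corner but sits on the bottom row; it lies outside your $\Delta$, your construction leaves it untouched in $S'$, and the hypothesis of the next lemma then fails.

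Your assertion that under the strip reading the lemma would be false is also wrong, and it is precisely here that the paper's argument departs from pure truncation. If some $s\in S$ sits at $(x,1)$ with $x$ large and its path first runs right along the bottom row, truncating to the first vertex outside the strip would indeed delete $\Theta(n)$ edges. Instead the paper \emph{adds} a horizontal path $\tau$ from $(1,l)$ to the rightmost start at level $l$, so that this start is absorbed into $S_0$; the remaining starts at level $l$ are bumped up to level $l+1$ (losing at most one vertical edge each), and the intersections $\tau$ creates with the paths through $(1,1),\dots,(1,l-1)$ are removed by reusing Lemmas~\ref{lm:clean} and~\ref{lm:pushing} with this simple $\tau$. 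The key asymmetry is that only $\abs{E(\pi)\setminus E(\pi')}$ has to be bounded, not $\abs{E(\pi')\setminus E(\pi)}$: adding a long $\tau$ costs nothing on that count, and $\tau$ contributes a single path to $E(\pi)\triangle E(\pi')$ lying inside the strip. So the missing idea is this extend-rather-than-truncate move, together with an appeal to the cleaning/pushing machinery already developed; once you see that, the strip version goes through.
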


\begin{proof}
We prove the statement for the part concerning $S$, the part concerning $T$ is analogous. Let $S_l$ be the sequence of vertices from $S \backslash S_0$ located at some level $1 \leq l \leq k$ from the bottom boundary. Take the smallest $l$ so that $S_l$ is nonempty. Let $S_l=(s_{i_{1}},\dots,s_{i_{m}})$. We take the vertex $(1,l)$ and connect it with the unique path $\tau$ it to the rightmost vertex $s_{i_{m}} \in S_l$. For $j=1,\dots,m$, let $\pi_{i_{j}}$ be the path contained in $\pi$ starting from $s_{i_{j}}$. If $m>1$, for $1\leq j<m$ we replace the vertex $s_{i_{j}}$ with $s'_{i_{j}}$, defined as the first vertex on level $l+1$ which lies along $\pi_{i_{j}}$. If there is no such vertex, we replace the pair $(s_{i_{j}},t_{\sigma(i_{j})})$ with any vertex at least $k+1$ away from the boundary which does not belong to any path and modify $\pi_{i_{j}}$ to be a path of length zero.

If $(1,l) \in S$, the path $\tau$ will intersect the path $\rho$ belonging to $\pi$ which starts at $(1,l)$. In that case, we retain $(1,l)$ in $S$ and insert a new starting vertex $s'$, defined as the first vertex on level $l+1$ which lies along $\rho$. If there is no such vertex, we deal with this case in the same way as in the last case in the previous paragraph.

The tuple $\pi'$ thus obtained may contain intersecting paths, coming from intersections of $\tau$ with paths starting at vertices $(1,i)$ with $i<l$. We deal with these intersections by applying Lemma \ref{lm:clean} and \ref{lm:pushing}, exactly as in the proof of Lemma \ref{lm:intersect-main-comb}.

\begin{figure}[h!]
  \centering
\includegraphics[scale=1]{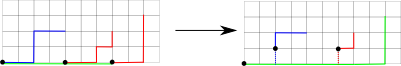}
  \caption{Removing the vertices at level $l$ for $l=1$. The path $\tau$ is shown in green.}
\label{fig:bottom}
\end{figure}

Overall, the procedure performed above is illustrated in Figure \ref{fig:bottom} for $l=1$. Note that the new tuple $\pi'$ uses all the edges contained in $\pi$, apart possibly from the vertical edges connecting levels $l$ to $l+1$, lost when replacing the vertices $s_{i_{j}}$ with $s'_{i_{j}}$, and edges removed in the pushing process. We have added the edges contained in $\tau$, plus possibly other edges during the switching and pushing part, all of which lie within distance $k$ to the boundary.
\end{proof}

\begin{proof}[Proof of Lemma \ref{lm:comb-everything}]
Given $\sigma, S, T$ and $\pi \in \Pi^{n.i.}_{S,T,\sigma}$, we first apply Lemma \ref{lm:bottom} to obtain $\sigma', S', T'$ and $\pi' \in \Pi^{n.i.}_{S',T',\sigma'}$ such that $\abs{E(\pi) \backslash E(\pi')}$ is bounded independently of $n$, the set $E(\pi) \triangle E(\pi')$ is a union of a bounded number of paths contained within distance at most $k$ from the bottom or top boundary, $S' \backslash S_0$ does not contain any vertices closer than $k+1$ to the bottom boundary and $T' \backslash T_0$ does not contain any vertices closer than $k+1$ to the top boundary. We are now in position to apply Lemma \ref{lm:intersect-main-comb} to obtain a tuple $\pi''$ and $\sigma'',S'',T''$ such that either $\ordST(S'') > \ordST(S)$ or $\ordST(T'') < \ordST(T)$, $E(\pi) \backslash E(\pi'')$ is bounded by some constant $C'$ independent of $n$ and $E(\pi) \triangle E(\pi'')$ is a union of at most $C'$ paths lying inside $N(\tau)\cup N_{b}$, where $\tau$ is some simple path, $N(\tau)$ is the neighborhood of $\tau$ of radius $6k$ and $N_{b}$ is the neighborhood of the boundary of radius $k$.

By repeating the above procedure, in each step increasing $\ordST(S)$ or decreasing $\ordST(T)$, after $m \leq 2k$ steps we arrive at sets $S,T$ with $\ordST(S)=k+1$ and $\ordST(T)=n-k$, which actually implies that $S=S_0$ and $T=T_0$, thus completing the proof. Let $\{\tau_i\}_{i=1}^{m}$ be the set of simple paths $\tau$ appearing at repeated applications of Lemma \ref{lm:intersect-main-comb}. We take $\mathcal{P}$ to be the set of all possible paths contained in $\cup_{i=1}^{m}N(\tau_{i})\cup N_{b}$. By Remark \ref{rm:ntau}, the set $\mathcal{P}$ has size at most $D' \cdot n^{D''}$ for some constants $D',D''$ depending only on $k$. The proof is finished by taking $C := \max\{D,D'',2kC'\}$.
\end{proof}

\bibliography{biblio}{}

\providecommand{\MR}[1]{}
\providecommand{\bysame}{\leavevmode\hbox to3em{\hrulefill}\thinspace}
\providecommand{\MR}{\relax\ifhmode\unskip\space\fi MR }
\providecommand{\MRhref}[2]{%
  \href{http://www.ams.org/mathscinet-getitem?mr=#1}{#2}
}
\providecommand{\href}[2]{#2}
\begin{thebibliography}{COSZ14}

\bibitem[BCR13]{log-gamma}
Alexei Borodin, Ivan Corwin, and Daniel Remenik, \emph{Log-gamma polymer free
  energy fluctuations via a fredholm determinant identity}, Communications in
  Mathematical Physics \textbf{324} (2013), no.~1, 215--232.

\bibitem[COSZ14]{tropical}
Ivan Corwin, Neil O’Connell, Timo Seppäläinen, and Nikolaos Zygouras,
  \emph{Tropical combinatorics and whittaker functions}, Duke Math. J.
  \textbf{163} (2014), no.~3, 513--563.

\bibitem[Dur10]{durrett}
R.~Durrett, \emph{Probability: Theory and examples}, Cambridge Series in
  Statistical and Probabilistic Mathematics, Cambridge University Press, 2010.

\bibitem[GV85]{gv}
Ira Gessel and Gérard Viennot, \emph{Binomial determinants, paths, and hook
  length formulae}, 300--321.

\bibitem[Hog06]{hogben}
Leslie Hogben, \emph{{Handbook of Linear Algebra}}, 1 ed., (Discrete
  Mathematics and Its Applications), Chapman \& Hall/CRC, November 2006.

\bibitem[KQ16]{jeremy}
Arjun Krishnan and Jeremy Quastel, \emph{Tracy-widom fluctuations for
  perturbations of the log-gamma polymer in intermediate disorder}, 2016.

\bibitem[QR14]{airy}
Jeremy Quastel and Daniel Remenik, \emph{Airy processes and variational
  problems}, Topics in Percolative and Disordered Systems (New York, NY)
  (Alejandro~F. Ram{\'i}rez, G{\'e}rard Ben~Arous, Pablo~A. Ferrari, Charles~M.
  Newman, Vladas Sidoravicius, and Maria~Eul{\'a}lia Vares, eds.), Springer New
  York, 2014, pp.~121--171.

\bibitem[Sep12]{seppa}
Timo Seppäläinen, \emph{Scaling for a one-dimensional directed polymer with
  boundary conditions}, Ann. Probab. \textbf{40} (2012), no.~1, 19--73.

\end{thebibliography}
\bibliographystyle{amsalpha}

\bigskip
\noindent
 Marcin Kotowski \\
 Faculty of Mathematics, Informatics, and Mechanics, University of Warsaw  \\
 ul. Banacha 2, 02-097 Warszawa \\
\noindent
{E-mail:} {\tt mk249015@mimuw.edu.pl} \\
\noindent
\url{http://www.math.toronto.edu/~marcin/}

\bigskip
\noindent
 B\'{a}lint Virag \\
 Department of Mathematics, University of Toronto \\
 Bahen Centre, 40 St. George St., Toronto, Ontario \\
 CANADA M5S 2E4
 \\
\noindent
{E-mail:} {\tt balint@math.toronto.edu} \\
\noindent
\url{http://www.math.toronto.edu/~balint/}

\end{document}